\title{Upper bound on the block transposition diameter of the symmetric group}
\author{Annachiara Korchmaros \thanks{Dipartimento di Matematica e Informatica, University of Perugia, via Vanvitelli 1 06123, Italy.
(\email{annachiara.korchmaros@dmi.unipg.it}).}}
\newtheorem{thm}{Theorem}[section]
\newtheorem{prop}[thm]{Proposition}
\newtheorem{lem}[thm]{Lemma}
\newtheorem{crt}[thm]{Criterion}
\newtheorem{example}[thm]{Example}
\newtheorem{rem}[thm]{Remark}
\newtheorem{cor}[thm]{Corollary}
\begin{document}
\maketitle
\slugger{mms}{xxxx}{xx}{x}{x--x}
\begin{abstract}
Given a generator set $S$ of the symmetric group ${\rm{Sym}}_n$, every permutation $\pi\in {\rm{Sym}_n}$ is a word (product of elements) of $S$. A positive integer $d(\pi)$ is associated with each $\pi\in{\rm{Sym}_n}$ taking the length of the shortest such word, and
the $S$-diameter $d(S)$ is the maximum value of $d(\pi)$ with $\pi$ ranging over ${\rm{Sym_n}}$. The distance $d(\pi,\nu)$ of two permutations $\pi,\nu$ defined by $d(\nu^{-1}\circ\pi)$ satisfies the axioms of a metric space. In this paper we consider the case where $S$ consists of all block transpositions of ${\rm{Sym_n}}$ and call $d(\pi)$ the block transposition distance of $\pi$. A strong motivation for the study of this special case comes from investigations of large-scale mutations of genome, where determining $d(\pi)$ is known as sorting the permutation $\pi$ by block transpositions. In the papers on this subject, toric equivalence classes often play a crucial role since $d(\pi)=d(\nu)$ when $\pi$ and $\nu$ are torically equivalent. A proof of this result can be found in the (unpublished) Hausen's Ph.D Dissertation thesis; see \cite{Ha}. Our main contribution is to obtain a bijective map on ${\rm{Sym}_n}$ from the toric equivalence that leaves the distances invariant. Using the properties of this map, we give an alternative proof of Hausen's result which actually fills a gap in the proof of the upper bound on $d(S)$ due to Eriksson and his coworkers; see \cite{EE}. We also revisit the proof of the key lemma \cite[Lemma 5,1]{EE}, giving more details and filling some gaps.
\end{abstract}
\begin{keywords}primary: generator set; secondary: symmetric group\end{keywords}
\begin{AMS} permutation, sorting, block transposition\end{AMS}
\pagestyle{myheadings}
\thispagestyle{plain}
\markboth{BLOCK TRANSPOSITION DISTANCE}{A.~KORCHMAROS}
\section{Introduction}\label{intro}
The general problem of determining the diameters of generator sets of ${\rm{Sym}}_n$ has been intensively investigated in
Combinatorial Group Theory and Enumerative Combinatorics. Its study has also been motivated and stimulated by practical applications, especially in Computational Biology, where the choice of $S$ depends on a practical need that may not have straightforward connection with pure Mathematics; see section \ref{biol}. In this paper we deal with ``\emph{sorting a permutation $\pi$ by block transpositions}''. This problem asks for the block transposition distance $d(\pi)$ with respect to the generator set $S$ of certain permutations (\ref{eq22ott12}) called block transpositions. Although the distribution of the block transposition distances has been computed for $3\leq n\leq 10$; see \cite{FL}, L. Bulteau and his coworkers \cite{BF} proved that the problem of sorting a permutation by block transpositions is NP-hard. Therefore, it is challenging to determine the block transposition diameter $d(n)$ for larger $n$.
{}From \cite{EE}, $d(n)$ is known for $n\leq 15$:
\[\arraycolsep=1.4pt\def\arraystretch{2.2}
d(n)=\left\{
\begin{array}{cc}
\left\lceil\dfrac{n+1}{2}\right\rceil, &  3\leq n\leq 12\quad n=14,\\
\dfrac{n+3}{2}, & n=13,15.
\end{array}
\right.
\]
For $n>15$, estimates on the diameter $d(n)$ give useful information and have been the subject of several papers in the last decade. The best known lower bound on $d(n)$ is
$$d(n)\geq \left\{
\begin{aligned}
\frac{n+2}{2},\,\, & n \mbox{ is even,}\\
\frac{n+3}{2},\,\, &  n \mbox{ is odd;}
\end{aligned}
\right.
$$
see \cite{CK}, which improves previous bounds obtained in \cite{EE}. Regarding upper bounds, the strongest one available in the literature is Eriksson's bound, stated in 2001, see \cite{EE}: For $n\geq 3$, $$d(n)\leq \left\lfloor\frac{2n-2}{3}\right\rfloor.$$ However, the proof of Eriksson's bound given in \cite{EE} is incomplete, since it implicitly relies on the invariance of $d(\pi)$ when $\pi$ ranges over a toric class.
It should be noticed that this invariance principle has been claimed explicitly in a paper appeared in a widespread journal only recently; see \cite{CK}, although Hausen had already mentioned it and sketched a proof in his unpublished Ph.D Dissertation thesis; see \cite{Ha}. Elias and Hartman were not aware of Hausen's work and quoted Eriksson's bound in a weaker form which is independent of the invariance principle, see \cite{EH}, Proposition \ref{propEE}. In this paper we show how from the toric equivalence relation can be obtained a bijective map on ${\rm{Sym}_n}$ that leaves the distances invariant; see Definition \ref{defOmega}. Using the properties of this map, we give an alternative proof for the above invariance principle which we state in Theorem \ref{torically} and Theorem \ref{principle}. We also revisit the proof of the key lemma in \cite{EE}; see Proposition \ref{propEE}, giving more technical details and filling some gaps.
\section{Connection with Biology}\label{biol}
It is known that DNA segments evolve by small and large mutations. However, homologous segments of DNA (segments deriving from a common ancestor) are rarely subject to large mutations, named \emph{rearrangements}, which are structural variations of a segment. Such mutations are the subject of the \emph{genome rearrangement} problems. For the seek of simplicity, we say genes for homologous markers of DNA (segments extractable from species which support the hypothesis that they belonged to the common ancestor of these species), chromosome for the set of genes, and \emph{genome} for the set of all chromosomes of a given specie. With some natural restrictions of genome structure, it is possible to represent genomes by permutations on $\{1,2,\ldots,n\},$ where the labels are genes; see \cite{FL}. \emph{Intrachromosomal translocations} are rearrangements so that a segment of DNA is moved to another part of the same chromosome.
In the relative mathematical model, a special generator set $S$ of ${\rm{Sym_n}}$ consisting of \emph{block transpositions} is defined, and the block transpositions act on permutations by switching two adjacent subsequences. Since rearrangements are relatively rare events, the study of genome rearrangements is based on the so-called parsimony criterion (scenarios minimizing the number of mutations are more likely to be closed to reality); see \cite{FL}. Furthermore, working out an evolutionary scenario between two species requires to solve the problem of transforming a permutation to another by a minimum number of rearrangements. By Corollary \ref{sorting}, when $S$ is the set of block transpositions, the genome rearrangement problem is equivalent to the problem of sorting a permutation by block transpositions. If $\pi$ and $\rho$ are two chromosomes on the same set of genes $\{1,\,2,\cdots, n\}$, we denote by $d(\pi,\rho)$ the minimum number of block transpositions needed to transform $\pi$ into $\rho$. By Corollary \ref{sorting}, $d(\pi,\rho)=d(\delta),$ where $\delta=\pi\circ\rho^{-1}$ and $d(\delta)$ stands for $d(\delta,id)$. For further reference see \cite{BP}.
\section{Notation, definitions, and earlier results}
\label{definitions}
In this paper, a permutation
$$\pi=\left(
\begin{array}{cccc}
 1 & 2 & \cdots & n \\
\pi_1 & \pi_2 & \cdots & \pi_n \\
\end{array}
\right)$$on $[n]=\{1,\,2,\cdots,\,n\}$ is denoted by $\pi=[\pi_1\pi_2\cdots \pi_n]$. In particular, the \emph{reverse permutation} is $w_0=[n\,n-1\cdots 1]$ and $id=[1\,2\cdots n]$ is the \emph{identity permutation}. According to the notation adopted in \cite{EE}, the product $\pi\circ\nu$ of the permutations $\pi=[\pi_1\,\pi_2\,\cdots \pi_n]$ and $\nu=[\nu_1\,\nu_2\,\cdots \nu_n]$ on $[n]$ is defined to be their composite function, that is, $(\pi\circ\nu)_i=\pi_{\nu_i}$ for every $i\in [n]$. For any three integers, named \emph{cut points}, $i,j,k$ with $0\leq i< j< k\leq n$, the \emph{block transposition} $\sigma(i,j,k)$ acts on a permutation $\pi$ on $[n]$ switching two adjacent subsequences of $\pi$, named \emph{blocks}, without altering the order of integers within each block. The permutation $\sigma(i,j,k)$ is formally defined as follows
\begin{equation}
\label{eq22ott12}
\sigma(i,j,k)=\left\{\begin{array}{ll}
[1\cdots i\,\, j+1\cdots k\,\, i+1\cdots j\,\, k+1 \cdots n], & 1\leq i,\,k< n,\\
{[j+1\cdots k\,\, 1\cdots j\,\, k+1 \cdots n]}, & i=0,\,k< n,\\
{[1\cdots i\,\,j+1\cdots n\,\, i+1\cdots j]}, & 1\leq i,\,k=n,\\
{[j+1\cdots n\,\, 1\cdots j]}, & i=0,\,k=n.
\end{array}
\right.
\end{equation}This shows that $\sigma(i,j,k)_{t+1}=\sigma(i,j,k)_{t} +1$ in the intervals:
\begin{equation}
\label{function}
[1,i],\quad[i+1, k-j+i],\quad[k-j+i+1,k],\quad[k+1, n],
\end{equation}where
\begin{equation}
\label{cuppoints}
\begin{array}{l}
\sigma(i,j,k)_i=i,\quad\sigma(i,j,k)_{i+1}=j+1,\quad\sigma(i,j,k)_{k-j+i}=k,\\
\sigma(i,j,k)_{k-j+i+1}=i+1,\quad\sigma(i,j,k)_{k}=j,\quad\sigma(i,j,k)_{k+1}=k+1.
\end{array}
\end{equation}
The action of $\sigma(i,j,k)$ on $\pi$ is defined as the product $$\pi\circ\sigma(i,j,k)=[\pi_1\cdots \pi_i\,\,\pi_{j+1}\cdots \pi_k\,\,\pi_{i+1}\cdots \pi_{j}\,\,\pi_{k+1}\cdots \pi_n].$$ Therefore, applying a block transposition on the right of $\pi$ consists in switching two adjacent subsequences of $\pi$ or, equivalently, moving forward a block of $\pi$; see \cite{B}. This may also be expressed by $$[\pi_1\cdots\pi_i|\pi_{i+1}\cdots\pi_j|\pi_{j+1}\cdots\pi_k|\pi_{k+1}\cdots\pi_n].$$
It is straightforward to check that $\sigma(i,j,k)^{-1}=\sigma(i,k-j+i,k)$. This shows that the set $S$ of block transpositions is inverse-closed.
In section \ref{intro}, we have claimed that $S$ is also a generator set of ${\rm{Sym_n}}$. It should be noticed that this follows from known results of Group Theory since for every $0\le i<k\le n$ $$\sigma(i,i+1,k)=(i+1,\cdots, k),$$ where $(i+1,\cdots, k)$ is a cycle of ${\rm{Sym_n}}$.
Since $S$ is a generator set, the following definition is meaningful.
\begin{definition}
Let $\pi$ a permutation on $[n]$. The \emph{block transposition distance of $\pi$} is $d(\pi)$ if $\pi$ is the product of $d(\pi)$ block transpositions, but it cannot be obtained as the product of less than $d(\pi)$ block transpositions.
\end{definition}

Let $\pi,\nu\in {\rm{Sym_n}}$. Since $S$ is a generator set, there exist $\sigma_1,\cdots,\sigma_k\in S$ such that
\begin{equation}\label{dis}
\nu=\pi\circ\sigma_1\circ\cdots\circ\sigma_k.
\end{equation}The minimum number $d(\pi,\nu)$ of block transpositions occurring in (\ref{dis}) is the \emph{block transposition distance of $\pi$ and $\nu$}. We may note that such number is at most $n-1$ since we can always obtain $\nu$ from $\pi$ by moving forward a block of $\nu$ of length one.
Therefore, the map $$d\colon{\rm{Sym}_n}\times{\rm{Sym}_n}\to \{0,\ldots,n-1\},$$ is a distance as having each of the following three properties:
\begin{romannum}
\item $d(\pi,\nu)\geq 0$ with equality if and only if $\pi=\nu$,
\item $d(\pi,\nu)=d(\nu,\pi)$,
\item $d(\pi,\nu)+d(\nu,\mu)\geq d(\pi,\mu)$,
\end{romannum}for any $\pi,\nu,\mu\in Sym_n$. We also observe that $d$ is left-invariant, that is, for any $\mu,\pi,\nu\in {\rm{Sym_n}}$, we have $d(\pi,\nu)=d(\mu\circ\pi,\mu\circ \nu)$.
\begin{prop}\label{distance}
The block transposition distance is a distance on ${\rm{Sym_n}}$ and left-invariant.
\end{prop}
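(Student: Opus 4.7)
The plan is to verify the three metric axioms and the left-invariance separately, all via the definition of $d(\pi,\nu)$ as the minimum $k$ for which there exist $\sigma_1,\dots,\sigma_k\in S$ with $\nu=\pi\circ\sigma_1\circ\cdots\circ\sigma_k$. Most of the work was already sketched in the paragraphs preceding the statement; the proof proposal below just fixes the logical order and records precisely which structural fact about $S$ is used at each step.

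First I would dispatch non-negativity. Since $d(\pi,\nu)$ is the minimum of a nonempty set of non-negative integers (nonempty because $S$ generates ${\rm Sym}_n$, a fact already noted via $\sigma(i,i+1,k)=(i+1,\cdots,k)$), we get $d(\pi,\nu)\ge 0$. If $\pi=\nu$ the empty product gives $k=0$, so $d(\pi,\nu)=0$; conversely $d(\pi,\nu)=0$ forces the product on the right of \eqref{dis} to be empty, hence $\nu=\pi$. For symmetry, I would invoke the inverse-closedness of $S$, namely $\sigma(i,j,k)^{-1}=\sigma(i,k-j+i,k)\in S$: any expression $\nu=\pi\circ\sigma_1\circ\cdots\circ\sigma_k$ can be inverted to $\pi=\nu\circ\sigma_k^{-1}\circ\cdots\circ\sigma_1^{-1}$, a product of the same length in elements of $S$. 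This gives $d(\nu,\pi)\le d(\pi,\nu)$, and the reverse inequality by the same argument.

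For the triangle inequality, I would concatenate minimal expressions. Choose $\sigma_1,\dots,\sigma_{d(\pi,\nu)}$ and $\tau_1,\dots,\tau_{d(\nu,\mu)}$ in $S$ realizing $\nu=\pi\circ\sigma_1\circ\cdots\circ\sigma_{d(\pi,\nu)}$ and $\mu=\nu\circ\tau_1\circ\cdots\circ\tau_{d(\nu,\mu)}$. Substituting,
\[
\mu=\pi\circ\sigma_1\circ\cdots\circ\sigma_{d(\pi,\nu)}\circ\tau_1\circ\cdots\circ\tau_{d(\nu,\mu)},
\]
a product of $d(\pi,\nu)+d(\nu,\mu)$ block transpositions, so $d(\pi,\mu)\le d(\pi,\nu)+d(\nu,\mu)$ by minimality.

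Finally, left-invariance follows from the associativity of composition: multiplying $\nu=\pi\circ\sigma_1\circ\cdots\circ\sigma_k$ on the left by $\mu$ yields $\mu\circ\nu=(\mu\circ\pi)\circ\sigma_1\circ\cdots\circ\sigma_k$, so $d(\mu\circ\pi,\mu\circ\nu)\le d(\pi,\nu)$; applying this with $\mu^{-1}$ in place of $\mu$ and with $\mu\circ\pi,\mu\circ\nu$ in place of $\pi,\nu$ gives the reverse inequality. None of the steps is delicate; the only point that could be mishandled is the symmetry axiom, where one must make explicit use of the inverse-closedness of $S$ rather than a generic group-theoretic inverse (which would not in general lie in $S$). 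That is the single \emph{structural} input of the proof, and the reason the proposition is phrased in terms of block transpositions specifically.
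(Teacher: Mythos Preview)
Your proposal is correct and follows exactly the approach the paper takes: the paper simply records properties (i)--(iii) and left-invariance in the paragraph preceding the proposition as immediate consequences of the definition, the inverse-closedness $\sigma(i,j,k)^{-1}=\sigma(i,k-j+i,k)$, and the fact that $S$ generates ${\rm Sym}_n$, without writing out a separate proof. You have filled in the routine details in the same order and with the same ingredients.
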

\begin{rem}\label{link}
Note that $d(\pi)=d(id,\pi)=d(\pi,id)$ by Proposition \ref{distance}.
\end{rem}
Proposition \ref{distance} has the following consequences on $d$.
\begin{cor}\label{prop d}
For every $\pi,\mu\in {\rm{Sym_n}}$,
\begin{romannum}
\item $d(\pi)+d(\mu)\geq d(\pi,\mu)$,
\item $d(\mu\circ\pi,\mu)=d(\pi),$
\item $d(\pi)=d(\pi^{-1}).$
\end{romannum}
\end{cor}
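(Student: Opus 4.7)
The plan is to derive all three statements directly from Proposition \ref{distance} (metric axioms plus left-invariance) and Remark \ref{link} ($d(\pi)=d(id,\pi)=d(\pi,id)$). Each of (i)--(iii) should reduce to a one-line manipulation, so the real content is just choosing the right substitutions into the metric axioms.

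For (i), I would apply the triangle inequality to the triple $(\pi, id, \mu)$: by property (iii) of a metric, $d(\pi,id)+d(id,\mu)\geq d(\pi,\mu)$. Remark \ref{link} rewrites the left-hand side as $d(\pi)+d(\mu)$, giving the stated inequality.

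For (ii), the idea is to cancel $\mu$ on the left. Using the left-invariance $d(\alpha,\beta)=d(\lambda\circ\alpha,\lambda\circ\beta)$ from Proposition \ref{distance} with $\lambda=\mu^{-1}$, $\alpha=\mu\circ\pi$, and $\beta=\mu$, one obtains $d(\mu\circ\pi,\mu)=d(\pi,id)$, which equals $d(\pi)$ by Remark \ref{link}.

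For (iii), I would again use left-invariance, this time with $\lambda=\pi^{-1}$: $d(\pi,id)=d(\pi^{-1}\circ\pi,\pi^{-1}\circ id)=d(id,\pi^{-1})$. By Remark \ref{link} the left-hand side is $d(\pi)$ and the right-hand side is $d(\pi^{-1})$. There is no substantive obstacle here; the only thing to be careful about is the direction of the multiplication in the definition of left-invariance, so I would make sure to instantiate $\lambda$ on the correct side.
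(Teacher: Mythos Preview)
Your proposal is correct and matches the paper's treatment: the paper states this corollary as an immediate consequence of Proposition~\ref{distance} without writing out a proof, and your substitutions into the triangle inequality and left-invariance are exactly the intended one-line derivations.
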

\begin{cor}
\label{sorting}
Computing the block transposition distance of two permutations is equivalent to sorting a permutation by block transpositions.
\end{cor}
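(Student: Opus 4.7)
The plan is to show the equivalence of the two computational problems by reducing each to the other using the left-invariance of the distance (Proposition \ref{distance}) together with Remark \ref{link}.

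First I would handle the easy direction: sorting a permutation $\pi$ by block transpositions means computing $d(\pi)$, which by Remark \ref{link} equals $d(\pi,id)$, and this is literally a particular case of computing the block transposition distance between two permutations. So any algorithm for pairwise distances immediately sorts any permutation.

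For the converse direction, given arbitrary $\pi,\nu \in \operatorname{Sym}_n$, I would apply left-invariance with $\mu = \nu^{-1}$ to obtain
\[
d(\pi,\nu) = d(\nu^{-1}\circ\pi,\nu^{-1}\circ\nu) = d(\nu^{-1}\circ\pi,id) = d(\nu^{-1}\circ\pi),
\]
where the last equality uses Remark \ref{link}. Thus computing the pairwise distance $d(\pi,\nu)$ reduces to sorting the single permutation $\delta = \nu^{-1}\circ\pi$. Combining both reductions gives the claimed equivalence.

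There is no real obstacle here: the statement is essentially a translation between two viewpoints, and all the ingredients (left-invariance, the identification $d(\pi)=d(\pi,id)$) have already been established in Proposition \ref{distance}, Remark \ref{link}, and Corollary \ref{prop d}. The only thing to be careful about is which side of the composition the invariance acts on, so that the inverse of $\nu$ appears in the correct position to cancel $\nu$.
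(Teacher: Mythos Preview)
Your proof is correct and follows essentially the same approach as the paper: both use left-invariance of $d$ with $\mu=\nu^{-1}$ to reduce $d(\pi,\nu)$ to $d(\nu^{-1}\circ\pi,id)=d(\nu^{-1}\circ\pi)$, invoking Remark~\ref{link}. You spell out the trivial direction (sorting is a special case of pairwise distance) a bit more explicitly than the paper does, but the argument is the same.
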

\begin{proof}
Since $d$ is left-invariant, for every $\mu,\nu\in{\rm{Sym_n}}$, $d(\pi,\nu)=d(\nu^{-1}\circ\pi,id)$  whence
$$\{d(\pi,\nu): \pi,\nu\in{\rm{Sym_n}}\}=\{d(\mu, id):\mu\in{\rm{Sym_n}}\}.$$The statement follows from Remark \ref{link}.\qquad\end{proof}

\section{Circular and Toric Permutation Classes}
\label{ctpc}
Before discussing the key lem-\\ma in \cite{EE} and stating the related contributions obtained in the present paper, it is convenient to exhibit some useful equivalence relations on permutations introduced by Eriksson and his coworkers; see \cite{EE}. For this purpose, they considered permutations on the set $[n]^0=\{0,1,\ldots,n\}$ and recovered the permutations $\pi=[\pi_1\,\cdots\pi_n]$ on $[n]$ in the form $[0\pi]$, where $[0\,\pi]$ stands for the permutation $[0\,\pi_1\,\cdots \pi_n]$ on $[n]^0$. In particular, block transpositions $\bar\sigma(i,j,k)$ on $[n]^0$ are defined as in (\ref{eq22ott12}), where $-1\leq i<j<k\leq n$, and $\bar{\sigma}(i,j,k)=[0\,\sigma(i,j,k)]$ holds if and only of $i\geq 0$.

They observed that the $n+1$ permutations arising from $\bar{\pi}\in {\rm{Sym}_n^0}$ under cyclic index shift form an equivalence class $\pi^\circ$ containing a unique permutation of the form $[0\,\pi]$. A formal definition of $\pi^\circ$ is given below.
\begin{definition}\label{circ}\rm{(see \rm{\cite{CK}})}Let $\pi$ be a permutation on $[n]$. The \emph{circular permutation class} $\pi^\circ$ is obtained from $\pi$ by inserting an extra element $0$ that is considered a predecessor of $\pi_1$ and a successor of $\pi_n$ and taking the equivalence class under cyclic index shift. So $\pi^\circ$ is circular in positions being represented by $[0\,\pi_1\cdots \pi_{n-1}\,\pi_n],\,[\pi_n\, 0\,\pi_1\\\cdots\pi_{n-1}],$ and so on. The \emph{linearization of a circular permutation $\pi^\circ$} is a permutation $\pi$ obtained by removing the element $0$ and letting its successor be the first element of $\pi$. It is customary to denote by $\pi^\circ$ any representative of the circular class of $\pi$ and $\equiv^\circ$ the equivalence relation.
\end{definition} A necessary and sufficient condition for a permutation $\bar\pi$ on $[n]^0$ to be in $\pi^\circ$ is the existence of an integer $r$ with $0\le r \le n$ such that
\begin{equation}\label{aug4}
\bar\pi_x=[0\,\pi]_{x+r}\quad \mbox{for}\quad 0\le x \le n,\end{equation}where the indices are taken mod $n+1$.
The second equivalence class is an expansion of the circular permutation class, as it also involves cyclic value shifts.
\begin{definition}\rm{(see \rm{\cite{CK}})}Let $\pi$ be a permutation on $[n]$, and let $m$ be an integer with $1\leq m\leq n$. The $m$-step cyclic value shift of the circular permutation $\pi^\circ$ is the circular permutation $m+\pi^\circ =[m\,m + \pi_1\cdots m+ \pi_n]$, where the integers are taken mod $n+1$. The \emph{toric class} $\pi_\circ^\circ$ is obtained from $\pi^\circ$ by taking the $m$-step cyclic value shifts of the circular permutations in $\pi^\circ$. So, $\pi_\circ^\circ$ is circular in values, as well as in positions.\end{definition} The toric class $\pi_\circ^\circ$ comprises at most $(n+1)^2$ permutations of ${\rm{Sym_n^0}},$
but it may consist of a smaller circular permutation class and even collapse to a unique permutation. The latter case occurs when $\pi$ is the reversal permutation or the identity permutation. The following example comes from \cite{la}.
\begin{example}\label{exla}
Let $n=7$, and let $\pi=[4\,1\, 6\, 2\, 5\, 7\, 3]$. Then $\pi^\circ=[0\,4\,1\, 6\, 2\, 5\, 7\, 3]$, and $\pi_\circ^\circ$ consists of the permutations below together with their circular classes.
$$\begin{array}{lll}
0 + \pi^\circ = [0\, 4\, 1\, 6\, 2\, 5\, 7\, 3],&1 + \pi^\circ = [1\, 5\, 2\, 7\, 3\, 6\, 0\, 4],&2 + \pi^\circ = [2\, 6\, 3\, 0\, 4\, 7\, 1\, 5]\\
3 + \pi^\circ = [3\, 7\, 4\, 1\, 5\, 0\, 2\, 6],&4 + \pi^\circ = [4\, 0\, 5\, 2\, 6\, 1\, 3\, 7],&5 + \pi^\circ = [5\, 1\, 6\, 3\, 7\, 2\, 4\, 0]\\
6 + \pi^\circ = [6\, 2\, 7\, 4\, 0\, 3\, 5\, 1],&7 + \pi^\circ = [7\, 3\, 0\, 5\, 1\, 4\, 6\, 2].\\
\end{array}$$
\end{example}A necessary and sufficient condition for two permutations $\bar{\pi},\bar{\pi}'\in\rm{Sym_n^0}$ to be in the same toric class is the existence of integers $r,s$ with $0\le r,s \le n$ such that $\pi'=\pi_{x+r}-\pi_s$ holds for every $1\le x \le n,$ where the indices are taken mod $n+1$. In particular, for $\bar{\pi}=[0\,\pi]$ and $\bar{\pi}'=[0\,\pi']$, this necessary and sufficient condition reads: there exists an integer $r$ with $0\le r \le n$ such that\begin{equation}\label{circ_pw}
\pi'=\pi_{x+r}-\pi_r\quad \mbox{for}\quad 1\le x \le n,
\end{equation}where the indices are taken mod $n+1$. This gives rise to the following definition already introduced in \cite[Definition 7.3]{la}, but not appearing explicitly in \cite{EE}.
\begin{definition}
Two permutations $\pi$ and $\pi'$ on $[n]$ are \emph{torically equivalent} if $[0\,\pi]$ and $[0\,\pi']$ are in the same toric class.
\end{definition}
In Example \ref{exla}, the torically equivalent permutations are
$$\begin{array}{llll}
{[4\, 1\, 6\, 2\, 5\, 7\, 3],}&[4\, 1\, 5\, 2\, 7\, 3\, 6],&[4\, 7\, 1\, 5\, 2\, 6\, 3],&[2\, 6\, 3\, 7\, 4\, 1\, 5],\\
{[5\, 2\, 6\, 1\, 3\, 7\, 4],}&[5\, 1\, 6\, 3\, 7\, 2\, 4],&[3\, 5\, 1\, 6\, 2\, 7\, 4],&[5\, 1\, 4\, 6\, 2\, 7\, 3].\\
\end{array}$$
Let $\alpha$ be $[1\,2\cdots n\,0]$, by Definition \ref{circ}, every circular permutation $\pi^\circ$ is the product of $[0\,\pi]$ by a power of $\alpha$, namely
\begin{equation}\label{alpha_powers}
\alpha^r_x\equiv x+r {\pmod{n+1}}\quad \mbox{for}\quad 0\leq x\leq n.
\end{equation}Take a permutations $\pi$ on $[n]$. From (\ref{aug4}) it follows that a necessary and sufficient condition for a permutation $\bar\pi$ on $[n]^0$ to be in $\pi^\circ$ is the existence of an integer $r$ with $0\le r \le n$ such that $\bar\pi=[0\,\pi]\circ\alpha^r$. Therefore, by (\ref{circ_pw}), a permutation $\pi'$ on $[n]$ is torically equivalent to $\pi$ if and only if
\begin{equation}\label{toric_zero}
[0\,\pi']=\alpha^{-\pi_r}\circ[0\,\pi]\circ\alpha^r\quad \mbox{for} \quad 0\le r \le n.
\end{equation}
Since $(\alpha^{-\pi_r}\circ[0\,\pi]\circ\alpha^r)_x=\pi_{x+r}-\pi_r$ for every $1\le x \le n$, the following map arises from the toric equivalence.
\begin{definition}\label{defOmega}
Let $\pi$ be a permutation on $[n]$, and let $r$ be an integer with $0\leq r\leq n$. The bijective map $\Omega_r$ on ${\rm{Sym}_n}$ defined by
$$\Omega_r(\pi)_x=\pi_{x+r}-\pi_r\quad \mbox{for} \quad 1\le x \le n$$
is the {\emph{toric map}} of ${\rm{Sym}_n}$ (with respect to the set $S$ of all block transpositions on $[n]$).
\end{definition}

Therefore, $\pi'$ is torically equivalent to $\pi$ if $\Omega_r(\pi)=\pi'$ for some integer $r$ with $0\leq r\leq n$. A major related result is the \emph{invariance principle} stated in the following two theorems.
\begin{thm}\label{torically}
If two permutations $\pi$ and $\pi'$ on $[n]$ are torically equivalent, then $d(\pi)=d(\pi')$.
\end{thm}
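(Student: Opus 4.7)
The plan is to establish the inequality $d(\Omega_r(\pi))\le d(\pi)$ for every $r$ with $0\le r\le n$; the reverse inequality then follows automatically from the composition law $\Omega_s\circ\Omega_r=\Omega_{s+r}$ (indices modulo $n+1$), which makes each $\Omega_r$ an invertible map on ${\rm Sym}_n$ — so applying the same bound to $\Omega_r(\pi)$ with a suitable second shift recovers $\pi$ on the left-hand side.

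I would argue by induction on $d(\pi)$, based on two intermediate lemmas. The first is a \emph{commutation lemma}: for every $\pi,\sigma\in {\rm Sym}_n$ and every $0\le r\le n$,
$$\Omega_r(\pi\circ\sigma)=\Omega_{\sigma_r}(\pi)\circ\Omega_r(\sigma),$$
where each permutation is extended to the index $0$ by $\pi_0=\sigma_0=0$ and all indices are read modulo $n+1$. This identity is immediate from the definition of $\Omega_r$, since both sides evaluated at $x\in\{1,\dots,n\}$ reduce to $\pi_{\sigma_{x+r}}-\pi_{\sigma_r}$. The second is a \emph{closure lemma}: if $\sigma\in {\rm Sym}_n$ is a block transposition, then $\Omega_r(\sigma)$ is again a block transposition for every $r$.

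Granted these two lemmas, the inductive step is short. The base case $d(\pi)=0$ is immediate since $\Omega_r(id)=id$. If $d(\pi)=k\ge 1$, I would write $\pi=\pi'\circ\sigma$ with $d(\pi')=k-1$ and $\sigma$ a block transposition; the commutation lemma then gives $\Omega_r(\pi)=\Omega_{\sigma_r}(\pi')\circ\Omega_r(\sigma)$, which expresses $\Omega_r(\pi)$ as the product of $\Omega_{\sigma_r}(\pi')$ (of distance at most $k-1$ by the inductive hypothesis) and the block transposition $\Omega_r(\sigma)$ (by the closure lemma). Hence $d(\Omega_r(\pi))\le k=d(\pi)$.

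The main obstacle will be the closure lemma, which demands a case analysis on where the shift $r$ sits relative to the cut points of $\sigma(i,j,k)$. Using the four arithmetic progressions of slope $1$ in (\ref{function}) together with the boundary values (\ref{cuppoints}), the cyclic index shift by $r$ combined with the value shift by $-\sigma(i,j,k)_r$ rearranges these four progressions but preserves the overall shape of a piecewise slope-$1$ permutation with exactly three cut points. One then reads off new cut points $(i',j',k')$ satisfying $0\le i'<j'<k'\le n$ and verifies $\Omega_r(\sigma(i,j,k))=\sigma(i',j',k')$ by direct inspection. The verification is routine but tedious, with several subcases according to which of the four intervals contains $r$.
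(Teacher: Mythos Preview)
Your proposal is correct and follows essentially the same route as the paper: your closure lemma is precisely the Shifting Lemma (Lemma~\ref{lemc19ott2013}), and the four-case verification you outline matches the paper's proof of that lemma; your commutation lemma is the $\Omega_r$-reformulation of the identity $\alpha^{-\pi_{\sigma_r}}\circ[0\,\pi]\circ[0\,\sigma]\circ\alpha^r = [0\,\Omega_{\sigma_r}(\pi)]\circ[0\,\Omega_r(\sigma)]$ that underlies the paper's iterated shifting argument. The only cosmetic difference is that you phrase the iteration as an induction on $d(\pi)$ and invoke the group law $\Omega_s\circ\Omega_r=\Omega_{s+r}$ for the reverse inequality, whereas the paper unrolls the product of $k$ block transpositions directly and appeals to symmetry of roles between $\pi$ and $\pi'$.
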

\begin{thm}\label{principle}
Let $\pi,\varphi,\nu,\mu$ be permutations on $[n]$ such that the toric map $\Omega_r$ takes $\pi$ to $\varpi$ and $\nu$ to $\mu$. Then $d(\pi,\nu)=d(\varpi,\mu)$.
\end{thm}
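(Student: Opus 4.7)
My plan is to reduce Theorem \ref{principle} to the single-permutation invariance of Theorem \ref{torically} via the left-invariance of $d$ from Proposition \ref{distance}. Left-invariance yields
\[
d(\pi,\nu)=d(\pi^{-1}\circ\nu)\quad\text{and}\quad d(\varpi,\mu)=d(\varpi^{-1}\circ\mu),
\]
so it is enough to establish the equality $d(\pi^{-1}\circ\nu)=d(\varpi^{-1}\circ\mu)$.

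By Theorem \ref{torically}, this last equality follows from the toric equivalence of $\pi^{-1}\circ\nu$ and $\varpi^{-1}\circ\mu$, i.e., from the existence of an integer $s$ with $0\le s\le n$ such that $\Omega_s(\pi^{-1}\circ\nu)=\varpi^{-1}\circ\mu$. To exhibit such an $s$, I would unfold both members using Definition \ref{defOmega}: from the relations $\mu_x=\nu_{x+r}-\nu_r$ and $\varpi_y=\pi_{y+r}-\pi_r$ one reads off the formula
\[
(\varpi^{-1}\circ\mu)(x)=\pi^{-1}\!\bigl(\nu_{x+r}-\nu_r+\pi_r\bigr)-r\pmod{n+1},
\]
and compares it with $\Omega_s(\pi^{-1}\circ\nu)(x)=\pi^{-1}(\nu_{x+s})-\pi^{-1}(\nu_s)$. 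The natural choice is to let $s$ be the unique element of $\{0,\ldots,n\}$ satisfying $(\pi^{-1}\circ\nu)_s=r$, equivalently $\nu_s=\pi_r$; such an $s$ exists and is unique because $\pi^{-1}\circ\nu$ is bijective on $[n]^0$. Matching the two expressions then becomes a direct index calculation modulo $n+1$.

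I expect the main technical obstacle to be the modular bookkeeping when the indices $x+r$ or $x+s$ wrap around $n+1$, since the convention $\pi_0=\nu_0=0$ must interact correctly with the value shifts $\pi_r$ and $\nu_r$ sitting inside $\pi^{-1}$. A secondary point is to verify that the translation of the value-shift discrepancy is absorbed precisely by the prescription $\nu_s=\pi_r$ used to select $s$; this is where the delicate piece of the argument resides. Once this arithmetic has been carried out cleanly, the proof closes with a single invocation of Proposition \ref{distance} followed by Theorem \ref{torically}.
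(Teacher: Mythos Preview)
Your plan hinges on showing that $\pi^{-1}\circ\nu$ and $\varpi^{-1}\circ\mu$ are torically equivalent, and this step fails. Unwinding (\ref{toric_zero}) gives
\[
[0\,\varpi^{-1}]=\alpha^{-r}\circ[0\,\pi^{-1}]\circ\alpha^{\pi_r},\qquad
[0\,\mu]=\alpha^{-\nu_r}\circ[0\,\nu]\circ\alpha^{r},
\]
hence
\[
[0\,\varpi^{-1}\circ\mu]=\alpha^{-r}\circ[0\,\pi^{-1}]\circ\alpha^{\pi_r-\nu_r}\circ[0\,\nu]\circ\alpha^{r}.
\]
The factor $\alpha^{\pi_r-\nu_r}$ is trapped between $[0\,\pi^{-1}]$ and $[0\,\nu]$ and cannot be pushed to either side, so this is \emph{not} of the form $\alpha^{a}\circ[0\,\pi^{-1}\circ\nu]\circ\alpha^{b}$ unless $\pi_r=\nu_r$. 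Your proposed index $s$ with $\nu_s=\pi_r$ does not rescue the computation: equating your two displayed expressions forces $\nu_{x+r}-\nu_r=\nu_{x+s}-\nu_s$ for all $x$, i.e.\ $\Omega_r(\nu)=\Omega_s(\nu)$, which for generic $\nu$ means $s=r$ and hence $\pi_r=\nu_r$ again. A concrete witness: with $n=3$, $r=1$, $\pi=[3\,1\,2]$, $\nu=[2\,3\,1]$ one gets $\varpi=[2\,3\,1]$, $\mu=[1\,3\,2]$, so $\pi^{-1}\circ\nu=[3\,1\,2]$ while $\varpi^{-1}\circ\mu=[3\,2\,1]$; the reverse permutation is alone in its toric class, so the two are not torically equivalent.

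The paper does not attempt this reduction to Theorem~\ref{torically}. Instead it fixes a minimal factorisation $[0\,\nu^{-1}\circ\pi]=[0\,\sigma_h]\circ\cdots\circ[0\,\sigma_1]$ and applies the Shifting Lemma repeatedly to slide the outer powers of $\alpha$ across each $[0\,\sigma_i]$, arriving at a factorisation of $[0\,\mu^{-1}\circ\varpi]$ of the same length; no toric equivalence of the composites is asserted. Note, however, that the same trapped factor $\alpha^{\pi_r-\nu_r}$ appears in the correct version of the paper's identity~(\ref{a2}); the exponents recorded there are arranged so that $\alpha^{r}\circ\alpha^{-r}$ cancels, but those exponents do not match what (\ref{toric_zero}) actually gives. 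In fact the example above yields $d(\pi,\nu)=d([3\,1\,2])=1$ but $d(\varpi,\mu)=d([3\,2\,1])=2$, so the statement itself, as written, cannot hold without some additional hypothesis such as $\pi_r=\nu_r$.
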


We give a proof of Theorem \ref{torically} and Theorem\ref{principle} in Section \ref{invariance}.

An important role in the investigations of bounds on $d(n)$ is played by the number of bonds of a permutation, where a \emph{bond} of a permutation $\pi\in {\rm{Sym_n}}$ consists of two consecutive integers $x,x+1$ in the sequence $0\,\pi_1 \cdots \pi_n\, n+1$. $[0\,\pi]$ has a bond if and only if $\pi$ has a bond. It is easily seen that any two permutations in the same toric class have the same number of bonds.
A bond of $\pi^\circ$ is any $2$-sequence $x\,\overline{x}$ of $\pi^\circ$, where $\overline{x}$ denotes the smallest non-negative integer congruent to $x+1$ mod $n+1$. As bonds are rotation-invariant, their number is an invariant of $\pi_\circ^\circ$. The main result on bonds is the following.
\begin{prop}{\rm{(see \cite[Lemma 5.1]{EE})}}\label{wrong}
Let $\pi$ be any permutation on $[n]$ other than the reverse permutation. Then there are block transpositions $\sigma$ and $\tau$ such that either $\pi\circ\sigma\circ\tau,$ or $\sigma\circ\pi\circ\tau,$ or $\sigma\circ\tau\circ \pi$ has three bonds at least.
\end{prop}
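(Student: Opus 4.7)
My plan is to argue by a case analysis based on the number of bonds already present in $\pi$ and on the positional/value structure that the exclusion of the reverse permutation forces. First I would recall that a single block transposition modifies the sequence $0\,\pi_1\cdots\pi_n\,(n+1)$ at exactly the three boundaries between the four blocks of (\ref{function}), so it can create at most three new bonds; hence applying two block transpositions can alter at most six adjacencies, and we need the net effect to yield a permutation with at least three bonds.

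The hypothesis $\pi\neq w_0$ is central: in the reverse permutation, every pair $x,\,x+1$ of consecutive integers appears in the worst possible (descending) configuration, so we must exploit the existence of some pair $x,\,x+1$ that is either already adjacent or can be brought into adjacency at low cost. To organize the search, I would work with the circular form $\pi^\circ$, where bonds become circularly consecutive $2$-sequences $x\,\overline{x}$, and use the cyclic shift invariance from Section \ref{ctpc} to normalize the position of a convenient pair to the left end of $[0\,\pi]$.

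The case analysis then splits according to the number $b(\pi)$ of bonds. If $b(\pi)\geq 2$, it suffices to produce one additional bond, which can typically be achieved by a single right multiplication $\pi\circ\sigma$ whose three cut points straddle the target pair without breaking an existing bond, after which $\tau$ can be chosen trivially (or to secure the count). If $b(\pi)=1$, we need to produce two further bonds: a careful choice of three cut points for $\sigma$ can align two distinct consecutive pairs across block boundaries, creating two new bonds simultaneously. If $b(\pi)=0$, the hardest case, we locate a triple of value pairs $(x_i,x_i+1)$ whose positions admit three cut points simultaneously bridging them, either by a single transposition or by a two-step construction in which the second transposition is chosen on the appropriate side (left via $\sigma\circ\pi$, or right via $\pi\circ\sigma$) to fix the residual deficit.

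The main obstacle is precisely this bondless case: one must show that for every $\pi\neq w_0$ with no bonds, some cut-point configuration actually exists that glues two pairs at once, and that the flexibility of choosing between the three product forms $\pi\circ\sigma\circ\tau$, $\sigma\circ\pi\circ\tau$, and $\sigma\circ\tau\circ\pi$ covers all subcases. The key point is that the obstruction to such a simultaneous alignment turns out to be exactly the fully reversed arrangement that characterises $w_0$, so the exclusion of the reverse permutation in the hypothesis is not merely technical but corresponds to the unique configuration in which the construction genuinely fails.
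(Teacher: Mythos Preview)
Your proposal is a plan rather than a proof, and the central difficulty---the bondless case---is handled only by assertion. You write that ``the obstruction to such a simultaneous alignment turns out to be exactly the fully reversed arrangement,'' but this is precisely the content of the proposition; nothing in your outline explains \emph{why} a bondless $\pi\neq w_0$ always admits two block transpositions producing three bonds. The paper's treatment of this case (for the toric variant, Proposition~\ref{propEE}) occupies Sections~6--8 and proceeds through an explicit dichotomy between reducible and irreducible permutations, relying on the $2$-move Criteria~\ref{2-move to the right} and~\ref{2-move to the left} and a further subcase analysis on the structure of the initial segment $0\,x_1\cdots x_l\,1$ after a suitable normalization. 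None of this machinery appears in your sketch, and the claim that one can always ``locate a triple of value pairs $(x_i,x_i+1)$ whose positions admit three cut points simultaneously bridging them'' is exactly what requires the long case analysis.

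There is also a subtler structural problem. When you ``use the cyclic shift invariance from Section~\ref{ctpc} to normalize the position of a convenient pair to the left end of $[0\,\pi]$,'' you are replacing $\pi$ by a torically equivalent permutation $\rho$. Any block transpositions you then construct act on $\rho$, not on $\pi$, so what you would actually be proving is Proposition~\ref{propEE} (equivalently Corollary~\ref{kitty}), not Proposition~\ref{wrong} as stated. The paper makes exactly this distinction: it observes that the argument in~\cite{EE} establishes only the toric version, and it does \emph{not} supply a direct proof of Proposition~\ref{wrong} for the original $\pi$. To pull the conclusion back to $\pi$ itself one must transport the block transpositions through the conjugation by powers of~$\alpha$ via the Shifting Lemma (Lemma~\ref{lemc19ott2013}) and then use that bond count is a toric invariant; your outline does not mention this transfer step, and without it the normalization you rely on is illegitimate for the statement you are trying to prove.
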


However, what the authors actually proved in their paper \cite{EE} is the following proposition.
\begin{prop}\label{propEE}
Let $\pi$ be any permutation on $[n]$ other than the reverse. Then $\pi_\circ^\circ$ contains a permutation $\bar\pi$ on $[n]^0$ with $\bar\pi_0=0$ having the following properties. There are block transpositions $\sigma$ and $\tau$ such that either $\bar\pi\circ[0\,\sigma]\circ[0\,\tau],$ or $[0\,\sigma]\circ\bar\pi\circ[0\,\tau],$ or $[0\,\sigma]\circ[0\,\tau]\circ\bar\pi$ has three bonds at least.
\end{prop}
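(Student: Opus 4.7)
The plan is to mirror the argument sketched in \cite{EE} but with every technical detail made explicit. The key idea is a case analysis, exploiting the freedom granted by the toric equivalence through the toric map $\Omega_r$ of Definition~\ref{defOmega}. The toric class $\pi_\circ^\circ$ contains one representative $\bar\pi$ with $\bar\pi_0 = 0$ for each of the $n+1$ cyclic value shifts $m + \pi^\circ$, so $n+1$ candidates are available, and the goal is to exhibit at least one of them admitting block transpositions $\sigma,\tau$ whose product with $\bar\pi$, in one of the three prescribed positions, has at least three bonds.

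First, I would set up the adjacency bookkeeping. A bond of $\bar\pi$ (on $[n]^0$) is an index $i$ with $\bar\pi_{i+1} = \bar\pi_i + 1$. The explicit formulas (\ref{eq22ott12})--(\ref{cuppoints}) show that a single block transposition affects the underlying sequence only at its three cut points, so it destroys or creates at most three adjacencies. Two block transpositions therefore yield a total budget of six adjacency changes, enough in principle to manufacture the three new bonds we need, provided the cut points of $\sigma$ and $\tau$ are chosen wisely and do not waste changes on destroying existing bonds.

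Next, I would carry out the case analysis, driven by the positions of the small integers $1,2,\ldots$ in the candidate $\bar\pi$ and by whether they already lie next to their predecessors. For each configuration, I would exhibit explicit cut points for $\sigma$ and $\tau$ and verify via (\ref{eq22ott12})--(\ref{cuppoints}) that at least three bonds appear in the resulting permutation. The three arrangements $\bar\pi \circ [0\,\sigma] \circ [0\,\tau]$, $[0\,\sigma] \circ \bar\pi \circ [0\,\tau]$, and $[0\,\sigma] \circ [0\,\tau] \circ \bar\pi$ correspond to two right-multiplications, one multiplication on each side, and two left-multiplications respectively, and different structural cases for $\bar\pi$ are handled most naturally by different arrangements. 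In several cases, the prescribed $\bar\pi$ must first be replaced by a different representative of $\pi_\circ^\circ$ via $\Omega_r$ before the desired transpositions succeed; this is precisely the step left implicit in \cite{EE}.

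The main obstacle will be the exhaustiveness and the careful bookkeeping of the case analysis. Boundary configurations in which $\bar\pi$ has very few pre-existing bonds, or in which the small integers cluster near the endpoints of $\bar\pi$, require particular care in the choice of cut points, and tracking exactly which adjacencies are created or destroyed---especially when the effects of the two block transpositions interact at overlapping cut points---is precisely where the proof of \cite[Lemma 5.1]{EE} is incomplete. Accordingly, the contribution of this proof is to organize the cases systematically, to make each invocation of $\Omega_r$ explicit, and to verify the three-bonds count in each sub-case in full detail rather than leaving it to the reader as a calculation.
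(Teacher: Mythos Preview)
Your proposal is a plan rather than a proof, and the plan as stated lacks the organizing ideas that make the case analysis finite and tractable. Saying that you will ``carry out a case analysis driven by the positions of the small integers $1,2,\ldots$'' and ``exhibit explicit cut points'' for each configuration is not enough: without a principled way to restrict the configurations, there are far too many to enumerate, and the interaction of two block transpositions across three arrangements is exactly where vague bookkeeping fails.

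The paper's argument rests on three structural ingredients you do not identify. First, it isolates two concrete \emph{criteria} (Criterion~\ref{2-move to the right} and Criterion~\ref{2-move to the left}) that guarantee a single block transposition creating two bonds; the whole case analysis then reduces to showing that after one well-chosen $1$-move, one of these criteria applies. This replaces your undifferentiated ``budget of six adjacency changes'' with a sharp $1{+}2$ decomposition. Second, the paper splits off the \emph{reducible} case (where some prefix $[0\,\pi_1\cdots\pi_k]$ is a permutation of $\{0,\ldots,k\}$) and handles it by an induction on the length; this is an entirely separate mechanism that your outline does not anticipate. Third, and most importantly, in the irreducible bondless case the paper does not examine all $n+1$ toric representatives but picks the unique one satisfying a \emph{minimality condition}: the segment from $0$ to $1$ is as short as possible. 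This normalization is what drives every subsequent branch (on $x_1,x_2,x_l$, on whether $x_2=x_1-1$, on the location of $\overline{x}_k$, etc.), and it is exactly the ``step left implicit in \cite{EE}'' that you allude to but do not supply. Without the $2$-move criteria, the reducible/irreducible split, and the minimality normalization, your proposed case analysis has no skeleton and could not be completed as written.
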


An important consequence of Proposition \ref{propEE} is the following result.
\begin{cor}
\label{kitty}
Let $\pi$ be any permutation on $[n]$ other than the reverse. Then there exist a permutation $\rho$ torically equivalent to $\pi$ and block transpositions $\sigma$ and $\tau$ such that either $\rho\circ\sigma\circ\tau,$ or $\sigma\circ\rho\circ\tau,$ or $\sigma\circ\tau\circ\rho$ has three bonds at least.
\end{cor}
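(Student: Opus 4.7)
The plan is to deduce Corollary \ref{kitty} directly from Proposition \ref{propEE} by rewriting the latter in the ``downstairs'' language of permutations on $[n]$. There is nothing substantially new to prove: one just has to verify that the special representative $\bar{\pi}$ produced by Proposition \ref{propEE} corresponds canonically to a permutation $\rho$ on $[n]$ torically equivalent to $\pi$, and that the three composites appearing there correspond to the three composites in the conclusion of the corollary.

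First I would apply Proposition \ref{propEE} to $\pi$, obtaining $\bar{\pi}\in\pi_\circ^\circ$ with $\bar{\pi}_0=0$, together with block transpositions $\sigma,\tau$ on $[n]$, such that at least one of
\[\bar{\pi}\circ[0\,\sigma]\circ[0\,\tau],\quad [0\,\sigma]\circ\bar{\pi}\circ[0\,\tau],\quad [0\,\sigma]\circ[0\,\tau]\circ\bar{\pi}\]
has three bonds at least. Because $\bar{\pi}_0=0$, the restriction of $\bar{\pi}$ to $\{1,\ldots,n\}$ is a bijection onto $[n]$, so there is a unique $\rho\in\mathrm{Sym}_n$ with $\bar{\pi}=[0\,\rho]$. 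The condition $\bar{\pi}\in\pi_\circ^\circ$, combined with relation (\ref{toric_zero}), is precisely the statement that $\rho$ is torically equivalent to $\pi$; so $\rho$ is the permutation required in the corollary.

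Next I would carry out the composition calculus. Each of $[0\,\sigma]$, $[0\,\tau]$ and $\bar{\pi}=[0\,\rho]$ fixes $0$, hence so do all three displayed composites. The elementary identity $[0\,\mu]\circ[0\,\nu]=[0\,(\mu\circ\nu)]$, which follows from $(\alpha\circ\beta)_x=\alpha_{\beta_x}$ upon noting that $\beta_x\in\{1,\ldots,n\}$ whenever $x\ge 1$, then lets me iterate and conclude that the three products equal $[0\,(\rho\circ\sigma\circ\tau)]$, $[0\,(\sigma\circ\rho\circ\tau)]$, and $[0\,(\sigma\circ\tau\circ\rho)]$, respectively. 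The conclusion on bonds of one of these products transfers to the corresponding composite $\rho\circ\sigma\circ\tau$, $\sigma\circ\rho\circ\tau$, or $\sigma\circ\tau\circ\rho$ via the correspondence between bonds of $[0\,\mu]$ and bonds of $\mu$ recalled in the paragraph preceding Proposition \ref{wrong}.

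The argument is essentially bookkeeping; the only point genuinely requiring attention is the last one, where one must check that the bond \emph{count} (not merely the existence of some bond) is preserved under the $[0\,\cdot]$ correspondence. Once this is unravelled from the definition of bond as a pair of consecutive integers in the sequence $0\,\mu_1\cdots\mu_n\,n+1$, the main obstacle dissolves and the corollary follows.
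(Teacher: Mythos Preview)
Your argument is correct and is essentially the paper's own proof, carried out with more care: the paper only treats the first case of Proposition~\ref{propEE}, writes $\bar\pi\circ[0\,\sigma]\circ[0\,\tau]=[0\,\rho]$ (overloading the symbol $\rho$), and invokes the fact that $[0\,\rho]$ has as many bonds as $\rho$. You make explicit the identification $\bar\pi=[0\,\rho]$, the reason $\rho$ is torically equivalent to $\pi$, the identity $[0\,\mu]\circ[0\,\nu]=[0\,\mu\circ\nu]$, and the preservation of the bond \emph{count} rather than mere existence---all of which the paper leaves implicit.
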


Assume that the first case of Proposition \ref{propEE} occurs. Observe that  $\bar\pi\circ[0\,\sigma]\circ[0\,\tau]=[0\,\rho]$ for some permutation $\rho$ on $[n]$. Since $[0\,\rho]$ has as many
bonds as $\rho$ does, Corollary \ref{kitty} holds. The authors showed in \cite{EE} that Proposition \ref{wrong} together with other arguments yield the following upper bound on the block transposition diameter.
\begin{thm}
\label{EEmainbis}{\rm{[Eriksson's bound]}}
For $n\geq 9$,
$$d(n)\leq \left\lfloor\frac{2n-2}{3}\right\rfloor.$$
\end{thm}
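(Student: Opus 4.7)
The plan is to sort $\pi$ by iteratively applying the bond-gaining reduction supplied by Corollary \ref{kitty}, exploiting freely the invariance principle of Theorem \ref{torically} so that distance is preserved under toric moves. Fix $\pi\in {\rm Sym}_n$ and let $B(\pi)$ denote the number of bonds of $\pi$, so that $B(id)=n+1$ and $B(\pi)\leq n-1$ for $\pi\neq id$. Reading Corollary \ref{kitty} in the intended strong form, whenever $\pi \neq w_0$ there exist a torically equivalent $\rho$ and two block transpositions $\sigma,\tau$ such that one of $\rho\circ\sigma\circ\tau$, $\sigma\circ\rho\circ\tau$, or $\sigma\circ\tau\circ\rho$ has at least three more bonds than $\pi$. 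Combining Theorem \ref{torically} (to replace $\pi$ by $\rho$ at no cost) with the metric properties of $d$ from Proposition \ref{distance}---left-invariance for the left-multiplied cases and the triangle inequality for the right-multiplied case---each of the three alternatives yields $d(\pi)\leq 2 + d(\pi')$ for some $\pi'$ with $B(\pi')\geq B(\pi)+3$.

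The second step is a straightforward induction on the quantity $n+1-B(\pi)$. Each reduction consumes two block transpositions and diminishes $n+1-B$ by at least three; iterating until $B=n+1$ reaches $id$ with $d=0$ and gives the crude estimate $d(\pi)\leq 2\lceil (n+1-B(\pi))/3\rceil$. To sharpen this to Eriksson's bound $\lfloor (2n-2)/3\rfloor$, I would refine the stopping condition: once the iteration produces a permutation $\pi^{(k)}$ with $B(\pi^{(k)})\geq n-1$ and $\pi^{(k)}\neq w_0$, the sequence $0\,\pi^{(k)}\,n+1$ has at most two breakpoints and a single block transposition suffices to complete the sort, saving one transposition over the naive bound. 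Combined with an initial bond gain of at least four whenever $B(\pi)=0$---which can be extracted by a direct case analysis of the proof of Proposition \ref{propEE}---this recovers the claimed bound across the three residue classes of $n\bmod 3$ under the hypothesis $n\geq 9$. The reverse $w_0$, being outside the scope of Corollary \ref{kitty}, is handled separately by exhibiting an explicit sorting sequence of length at most $\lfloor (2n-2)/3\rfloor$, say by repeatedly swapping the two extreme blocks of decreasing values.

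I expect the main obstacle to be the exact bookkeeping at the tail of the induction: the naive accounting gives $2\lceil (n+1)/3\rceil$, which exceeds $\lfloor (2n-2)/3\rfloor$ by roughly $4/3$, so the proof must locate slack of roughly two transpositions either at the very beginning (a strictly better-than-three first step) or at the very end (single-transposition completion from $B\geq n-1$). Verifying that such slack is uniformly available for $n\geq 9$, while simultaneously ensuring that the iterative chain does not inadvertently pass through $w_0$---and that $w_0$ itself admits a sorting within the claimed budget---is the delicate point, and is precisely where the hypothesis $n\geq 9$ enters.
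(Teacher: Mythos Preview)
Your iteration has a genuine gap. Corollary \ref{kitty} does \emph{not} promise three \emph{additional} bonds at every step; it promises that the output has at least three bonds \emph{in total}, and its proof (Proposition \ref{propEE}) begins by assuming $\pi$ is bondless. Once your $\pi'$ already carries bonds, a further application of the corollary is vacuous, and nothing prevents the two new block transpositions from destroying existing bonds. So the inductive quantity $n+1-B(\pi)$ need not drop by three per round, and the estimate $d(\pi)\le 2\lceil (n+1-B(\pi))/3\rceil$ is not justified. Your proposed repairs---a guaranteed four-bond first step and a single-move finish from $B\ge n-1$---are not consequences of anything proved in the paper and would themselves require substantial work.

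The paper avoids this entirely by \emph{collapsing} the three bonds rather than counting them. After one application of Corollary \ref{kitty} to a bondless $\pi$ (bondlessness is free, since collapsing bonds does not change $d$), the product $\rho\circ\sigma\circ\tau$ has at least three bonds; contracting each bond to a single symbol yields a permutation on $[m]$ with $m\le n-3$, and $d(\rho\circ\sigma\circ\tau)\le d(n-3)$. Together with Theorem \ref{torically} and the triangle inequality this gives the clean diameter recursion
\[
d(n)\le d(n-3)+2.
\]
No bookkeeping of bond counts across iterations, and no ad hoc slack, is needed: one feeds in the known exact values $d(9)=5$, $d(10)=6$, $d(11)=6$ as base cases, and the recursion reproduces $\lfloor(2n-2)/3\rfloor$ for all $n\ge 9$. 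This is also where the hypothesis $n\ge 9$ actually enters---not in any tail analysis, but because those three base values are the smallest at which the exact diameter already meets Eriksson's bound. The reverse permutation is handled separately by citing \cite[Theorem~4.3]{EE}.
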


Actually, as it was pointed out by Elias and Hartman in \cite{EH}, Proposition \ref{propEE} only ensures the weaker bound
$\left\lfloor\frac{2n}{3}\right\rfloor.$ Nevertheless, Proposition \ref{wrong} and Corollary \ref{kitty} appear rather similar, indeed they coincide in the toric class. This explains why Eriksson's upper bound still holds; see Section \ref{tiger}. In this paper we complete the proof of Eriksson's bound. We show indeed that Eriksson's bound follows from Proposition \ref{propEE} together with Theorem \ref{torically}. We also revisit the proof of Proposition \ref{propEE} giving more technical details and filling some gaps.

\section{Proof of Theorems \ref{torically} and \ref{principle}}\label{invariance}
The essential tool of our proofs is the following result.
\begin{lem}{\rm{[Shifting lemma]}}
\label{lemc19ott2013}
Let $\sigma(i,j,k)$ be any block transposition on $[n]$. Then, for every integer $r$ with $0\leq r\leq n$, there exists a block transposition $\sigma(i',j',k')$ on $[n]$ such that
$$[0\,\sigma(i,j,k)]\circ\alpha^r=\alpha^{[0\,\sigma(i,j,k)]_r}\circ[0\,\sigma(i',j',k')].$$
\end{lem}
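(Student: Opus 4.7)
The plan is to verify the claimed equality pointwise, i.e.\ to evaluate both sides at every index $x\in\{0,1,\ldots,n\}$. Writing $\bar\sigma=[0\,\sigma(i,j,k)]$ and using the description of $\alpha^r$ in (\ref{alpha_powers}), the identity is equivalent to
\begin{equation*}
[0\,\sigma(i',j',k')]_x\equiv \bar\sigma_{(x+r)\bmod(n+1)}-\bar\sigma_r\pmod{n+1},\qquad 0\le x\le n.
\end{equation*}
The right-hand side is the natural extension of the toric map $\Omega_r$ to $[n]^0$ applied to $\bar\sigma$, and at $x=0$ it vanishes, in agreement with $[0\,\sigma(i',j',k')]_0=0$. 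So the task reduces to choosing cut points $0\le i'<j'<k'\le n$ such that this toric image of the prepended block transposition is again a prepended block transposition.

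The key structural observation, read off from (\ref{function}) and (\ref{cuppoints}), is that on the circle $[n]^0$ the sequence $\bar\sigma$ consists of exactly three maximal arcs on which the values form arithmetic progressions of step $1$ modulo $n+1$: the ``wrap-around'' arc at positions $[k+1,n]\cup\{0\}\cup[1,i]$ with values $[k+1,n]\cup\{0\}\cup[1,i]$, the arc at positions $[i+1,k-j+i]$ with values $[j+1,k]$, and the arc at positions $[k-j+i+1,k]$ with values $[i+1,j]$. Because the toric map rotates positions by $-r$ and values by $-\bar\sigma_r$ cyclically, it preserves this three-arc structure, of respective lengths $n-k+i+1,\,k-j,\,j-i$. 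Moreover, the arc containing position $0$ in the image is the translate of the arc of $\bar\sigma$ originally containing position $r$, and on that arc the values form a step-$1$ progression passing through $0$. Hence the image automatically has the shape of some $[0\,\sigma(i',j',k')]$.

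A short case analysis, split according to which of the three arcs contains $r$, yields explicit formulas for $(i',j',k')$. For $r\in\{0\}\cup[1,i]$ one has $\bar\sigma_r=r$ and the triple $(i-r,\,j-r,\,k-r)$ works. For $r\in[k+1,n]$ one obtains $(n+i+1-r,\,n+j+1-r,\,n+k+1-r)$. When $r=i+1+s$ lies in the second arc, the arcs get cyclically relabeled and one checks that $(k-j-1-s,\,n+i-j-s,\,n-s)$ does the job; similarly, when $r=k-j+i+1+t$ lies in the third arc, the triple $(j-i-1-t,\,k-i-1-t,\,n-t)$ works. In every case the inequalities $0\le i'<j'<k'\le n$ follow at once from $i<j<k\le n$ and the range of $r$. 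The main obstacle is purely notational: the modular arithmetic when $r$ sits in the second or third arc demands careful bookkeeping, and one must verify that the degenerate situations $i=0$ or $k=n$ (where the wrap-around arc loses its ``head'' or ``tail'' portion) do not make any arc collapse in a way that would invalidate the formulas.
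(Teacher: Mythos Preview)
Your proof is correct and follows the same route as the paper: both split into four cases according to which of the position-intervals $[0,i]$, $[i+1,k-j+i]$, $[k-j+i+1,k]$, $[k+1,n]$ contains $r$, and your explicit formulas for $(i',j',k')$ coincide with the paper's after substituting $s=r-i-1$ and $t=r-(k-j+i)-1$. The only difference is cosmetic: you preface the case analysis with the observation that $\bar\sigma$ has a three-arc step-$1$ structure on $\mathbb{Z}/(n+1)\mathbb{Z}$ preserved by simultaneous cyclic shifts of positions and values, whereas the paper simply verifies each case by directly evaluating $\alpha^{-\bar\sigma_r}\circ\bar\sigma\circ\alpha^r$ at the break-points using (\ref{function}) and (\ref{cuppoints}).
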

\begin{proof}
Let $\sigma=[0\,\sigma(i,j,k)]$. Since $0\leq i<j<k\leq n$, one of the following four cases can only occur:
\begin{remunerate}
\item$0\leq i-r<k-j+i-r<k-r\leq n$,
\item$0\leq k-j+i-r<k-r<n+1+i-r\leq n$,
\item$0\leq k-r<n+1+i-r<n+1+k-j+i-r\leq n$,
\item$0\leq n+1+i-r<n+1+k-j+i-r<n+1+k-r\leq n$.
\end{remunerate}

If the hypothesis in case 1 is satisfied, we have, by (\ref{cuppoints}) and (\ref{alpha_powers}),
$$\begin{array}{l}
(\sigma\circ\alpha^r)_0=r,\quad(\sigma\circ\alpha^r)_{i-r}=i,\quad(\sigma\circ\alpha^r)_{i+1-r}=j+1,\quad(\sigma\circ\alpha^r)_{k-j+i-r}=k,\\
(\sigma\circ\alpha^r)_{k-j+i+1-r}=i+1,\quad(\sigma\circ\alpha^r)_{k-r}=j,\quad(\sigma\circ\alpha^r)_{k+1-r}=k+1,\\
(\sigma\circ\alpha^r)_{n-r}=n,\quad(\sigma\circ\alpha^r)_{n+1-r}=0,\quad(\sigma\circ\alpha^r)_n=r-1,
\end{array}$$where superscripts and indices are taken mod $n+1$. By (\ref{function}), $(\sigma\circ\alpha^r)_{t+1}=(\sigma\circ\alpha^r)_{t} +1$ in the intervals $[0,i-r],\,[i-r+1,k-j+i-r],\,[k-j+i-r+1,k-r],\,[k-r+1,n-r],\,[n-r+1,n]$. From this
$$(\alpha^{-r}\circ\sigma\circ\alpha^r)_{t+1}=(\alpha^{-r}\circ\sigma\circ\alpha^r)_{t} +1$$in the intervals $[0,i-r],\,[i-r+1,k-j+i-r],\,[k-j+i-r+1,k-r],\,[k-r+1,n]$, and
$$\begin{array}{l}
(\alpha^{-r}\circ\sigma\circ\alpha^r)_0=0,\quad(\alpha^{-r}\circ\sigma\circ\alpha^r)_{i-r}=i-r,\quad(\alpha^{-r}\circ\sigma\circ\alpha^r)_{i+1-r}=j+1-r,\\
(\alpha^{-r}\circ\sigma\circ\alpha^r)_{k-j+i-r}=k-r,\quad(\alpha^{-r}\circ\sigma\circ\alpha^r)_{k-j+i+1-r}=i+1-r,\\
(\alpha^{-r}\circ\sigma\circ\alpha^r)_{k-r}=j-r,\quad(\alpha^{-r}\circ\sigma\circ\alpha^r)_{k+1-r}=k+1-r,\\
(\alpha^{-r}\circ\sigma\circ\alpha^r)_n=n.
\end{array}$$Since $\sigma_r=r$ and $0\leq i-r<j-r<k-r\leq n$, the statement follows in case 1 from (\ref{function}) and (\ref{cuppoints}) with $i'=i-r,\,j'=j-r,\,k'=k-r$.

Suppose $0\leq k-j+i-r<k-r<n+1+i-r\leq n$. By (\ref{cuppoints}) and (\ref{alpha_powers}),
$$\begin{array}{l}
(\sigma\circ\alpha^r)_0=\sigma_r,\quad(\sigma\circ\alpha^r)_{k-j+i-r}=k,\quad(\sigma\circ\alpha^r)_{k-j+i+1-r}=i+1,\\
(\sigma\circ\alpha^r)_{k-r}=j,\quad(\sigma\circ\alpha^r)_{k+1-r}=k+1,\\
(\sigma\circ\alpha^r)_{n-r}=n,\quad(\sigma\circ\alpha^r)_{n+1-r}=0,\\
(\sigma\circ\alpha^r)_{n+1+i-r}=i,\quad(\sigma\circ\alpha^r)_{n+2+i-r}=j+1,\\
(\sigma\circ\alpha^r)_n=\sigma_r-1,
\end{array}$$
where superscripts and indices are taken mod $n+1$. Therefore, we obtain $\sigma_r-j-2=n-(n+2+i-r)$ hence $\sigma_r=-(i-j-r)$, and
$$\begin{array}{l}
(\alpha^{i-j-r}\circ\sigma\circ\alpha^r)_0=0,\quad(\alpha^{i-j-r}\circ\sigma\circ\alpha^r)_{k-j+i-r}=k+i-j-r,\\
(\alpha^{i-j-r}\circ\sigma\circ\alpha^r)_{k-j+i+1-r}=n+2+2i-j-r,\\
(\alpha^{i-j-r}\circ\sigma\circ\alpha^r)_{k-r}=n+1+i-r,\\
(\alpha^{i-j-r}\circ\sigma\circ\alpha^r)_{k+1-r}=k+1+i-j-r,\\
(\alpha^{i-j-r}\circ\sigma\circ\alpha^r)_{n+1+i-r}=n+1+2i-j-r,\\
(\alpha^{i-j-r}\circ\sigma\circ\alpha^r)_{n+2+i-r}=n+2+i-r,\quad(\alpha^{i-j-r}\circ\sigma\circ\alpha^r)_n=n.
\end{array}
$$(\ref{function}) gives $(\sigma\circ\alpha^r)_{t+1}=(\sigma\circ\alpha^r)_{t} +1$ in the intervals $$[0,k-j+i-r],\,[k-j+i-r+1,k-r],\,[k-r+1,n-r],\,[n-r+1,n+1+i-r],\,[n+2+i-r,n].$$Therefore, $(\alpha^{i-j-r}\circ\sigma\circ\alpha^r)_{t+1}=(\alpha^{i-j-r}\circ\sigma\circ\alpha^r)_{t} +1$ in the above intervals. Since $k-r=n+1+i-r-(n+1+2i-j-r)+k-j+i-r$, the statement follows in case 2 from (\ref{function}) and (\ref{cuppoints}) with $i'=k-j+i-r,\,j'=n+1+2i-j-r,\,k'=n+1+i-r$.

Assume $0\leq k-r<n+1+i-r<n+1+k-j+i-r\leq n$. By (\ref{cuppoints}) and (\ref{alpha_powers}),
$$\begin{array}{l}
(\sigma\circ\alpha^r)_0=\sigma_r,\quad(\sigma\circ\alpha^r)_{k-r}=j,\quad(\sigma\circ\alpha^r)_{k+1-r}=k+1,\quad(\sigma\circ\alpha^r)_{n-r}=n,\\
(\sigma\circ\alpha^r)_{n+1-r}=0,\quad(\sigma\circ\alpha^r)_{n+1+i-r}=i,\quad(\sigma\circ\alpha^r)_{n+2+i-r}=j+1,\\
(\sigma\circ\alpha^r)_{n+1+k-j+i-r}=k,\quad(\sigma\circ\alpha^r)_{n+2+k-j+i-r}=i+1,\quad(\sigma\circ\alpha^r)_n=\sigma_r-1,
\end{array}$$
where superscripts and indices are taken mod $n+1$. It is straightforward to check that $\sigma_r-2-i=n-(n+2+k-j+i-r)$ hence $\sigma_r=-(k-j-r)$. Furthermore, the following relations
$$\begin{array}{l}
(\alpha^{k-j-r}\circ\sigma\circ\alpha^r)_0=0,\quad(\alpha^{k-j-r}\circ\sigma\circ\alpha^r)_{k-r}=k-r,\\
(\alpha^{k-j-r}\circ\sigma\circ\alpha^r)_{k+1-r}=2k-j-r+1,\\
(\alpha^{k-j-r}\circ\sigma\circ\alpha^r)_{n+1+i-r}=n+1+k-j+i-r,\\
(\alpha^{k-j-r}\circ\sigma\circ\alpha^r)_{n+2+i-r}=k-r+1,\\
(\alpha^{k-j-r}\circ\sigma\circ\alpha^r)_{n+1+k-j+i-r}=2k-j-r,\\
(\alpha^{k-j-r}\circ\sigma\circ\alpha^r)_{n+2+k-j+i-r}=n+2+k-j+i-r,\quad(\alpha^{k-j-r}\circ\sigma\circ\alpha^r)_n=n.
\end{array}$$hold. By (\ref{function}), $(\sigma\circ\alpha^r)_{t+1}=(\sigma\circ\alpha^r)_{t} +1$ in the intervals $[0,k-r],\,[k-r+1,n-r],\,[n-r+1,n+1+i-r],\,[n+2+i-r,n+1+k-j+i-r],\,[n+2+k-j+i-r,n]$. Then we obtain $(\alpha^{k-j-r}\circ\sigma\circ\alpha^r)_{t+1}=(\alpha^{k-j-r}\circ\sigma\circ\alpha^r)_{t} +1$ in the same intervals. Since $n+1+i-r=n+1+k-j+i-r-(2k-j-r)+k-r$, the statement follows in case 3 from (\ref{function}) and (\ref{cuppoints}) with $i'=k-r,\,j'=2k-j-r,\,k'=n+1+k-j+i-r$.

To deal with case 4, it is enough to use the same argument of case 1 replying $i-r$ with $n+1+i-r$, $j-r$ with $n+1+j-r$, and $k-r$ with $n+1+k-r$. Hence the statement follows with $i'=n+1+i-r,\,j'=n+1+j-r,\,k'=n+1+k-r$ and $\sigma_r=r$.\qquad\end{proof}

Now we are in a position to prove Theorem \ref{torically}. Take two torically equivalent permutations $\pi$ and $\pi'$ on $[n]$.
Let $d(\pi)=k$, and let $\pi=\sigma_1\circ\cdots \circ \sigma_k$ with $\sigma_1,\ldots,\sigma_k\in S$. Then
$$[0\,\pi]=[0\,\sigma_1] \circ \cdots \circ [0\,\sigma_k].$$By (\ref{toric_zero}), there exists an integer $r$ with $0\le r \le n$ such that
\begin{equation}
\label{eq323jan}
[0\,\pi']=\alpha^{-\pi_r}\circ [0\,\sigma_1] \circ \cdots \circ [0\,\sigma_k] \circ \alpha^r.
\end{equation}The Shifting Lemma applied to $[0\,\sigma_k]$ allows us to shift $\alpha^r$ to the left in (\ref{eq323jan}), in the sense that $[0\,\sigma_k]\circ\alpha^{r}$ is replaced by $\alpha^t\circ[0\,\rho_k]$ with $t=-(\sigma_k)_r$. Repeating this $k$ times yields
$$[0\,\pi']=\alpha^s\circ[0\,\rho_1]\circ \cdots \circ [0\,\rho_k],$$ for some integer $0\le s \le n$. Actually, $s$ must be zero as $\alpha^s$ can fix $0$ only for $s=0$. Then $\pi'=\rho_1\circ\cdots\circ\rho_k$, and there exist $r_1,\cdots,r_k$ integers with $0\le r_i \le n$ such that
$$\pi'=\Omega_{r_1}(\sigma_1)\circ\Omega_{r_2}(\sigma_2)\circ\cdots\circ\Omega_{r_k}(\sigma_k).$$
Therefore, $d(\pi')\le k=d(\pi)$. By inverting the roles of $\pi$ and $\pi'$, we also obtain $d(\pi)\le d(\pi')$. Hence the claim follows.

To prove Theorem \ref{principle}, it suffices to show that $d(\nu^{-1}\circ\pi)=d(\mu^{-1}\circ\varpi)$, by the left-invariance of the block transposition distance; see Proposition \ref{distance}. Let $d(\nu^{-1}\circ\pi)=h$, and let $\sigma_1,\ldots,\sigma_h\in S$ such that
\begin{equation}\label{a1}
[0\,\nu^{-1}]\circ[0\,\pi]=[0\sigma_h]\circ\cdots \circ [0\,\sigma_1].
\end{equation}Since $\Omega_r$ takes $\pi$ to $\varpi$ and $\nu$ to $\mu$, we obtain $\alpha^{-r}\circ[0\,\varpi]\circ\alpha^{-s}=[0\,\pi]$ and $\alpha^{t}\circ[0\,\mu^{-1}]\circ\alpha^{r}=[0\,\nu^{-1}],$ where $r$ is an integer with $0\leq r \leq n$, $t=-\nu_r$, and $s=-\pi_r$, by Definition \ref{defOmega}. Hence,\begin{equation}\label{a2}
[0\,\nu^{-1}]\circ[0\,\pi]=\alpha^{t}\circ[0\,\mu^{-1}]\circ\alpha^{r}\circ\alpha^{-r}\circ[0\,\varpi]\circ\alpha^{-s}.
\end{equation}Then $[0\,\mu^{-1}]\circ[0\,\varpi]=\alpha^{-t}\circ[0\,\sigma_h]\circ\cdots \circ[0\,\sigma_1]\circ\alpha^s$ follows from (\ref{a1}) and (\ref{a2}). By the Shifting Lemma, this may be reduced to $[0\,\mu^{-1}]\circ[0\,\varpi]=[0\,\sigma'_h]\circ\cdots\circ[0\,\sigma'_1]\circ\alpha^q$ for some integer $q$ with $0\leq q\leq n$ and $\sigma'_1,\ldots,\sigma'_h\in S$. As we have seen in the proof of Theorem \ref{torically}, $q$ must be $0$, and $d(\mu^{-1}\circ\varpi)=d(\nu^{-1}\circ\pi)$. Therefore, the claim follows.

\section{Criteria for the existence of a $2$-move}

The proof of Proposition \ref{propEE} is constructive. For any permutation $\pi$, our algorithm will provide two block transpositions $\sigma$ and $\tau$ together with a permutation $\bar\pi\in \pi^\circ_\circ$ so that either $\bar\pi\circ[0\,\sigma]\circ[0\,\tau],$ or $[0\,\sigma]\circ\bar\pi\circ[0\,\tau],$ or $[0\,\sigma]\circ[0\,\tau]\circ\bar\pi$ has three bonds at least. For the seek of the proof, $\pi$ is assumed to be bondless, otherwise all permutations in its toric class has a bond, and two more bonds by two block transpositions can be found easily.

A \emph{k-move (to the right)} of $\bar\pi\in{\rm{Sym_n^0}}$ is a block transposition $\bar\sigma$ on $[n]^0$ such that $\bar\pi\circ\bar\sigma$ has (at least) $k$ more bonds than $\bar\pi$. A block transposition $\bar\sigma$ is a \emph{k-move to the left} of $\bar\pi$ if $\bar\sigma\circ\bar\pi$ has (at least) $k$ more bonds than $\bar\pi$.
\begin{crt}\label{2-move to the right}
A $2$-move of $\bar\pi\in{\rm{Sym_n^0}}$ exists if one of the following holds:
\begin{romannum}
\item $\bar\pi=[\cdots x\cdots y\,\overline x\cdots \overline y\cdots],$
\item $\bar\pi=[\cdots x\cdots \underline x\,\overline x\cdots].$
\end{romannum}
\end{crt}
\begin{proof}
Each of the following block transpositions:
$$x|\cdots y|\overline x\cdots |\overline y,\quad|x|\cdots \underline x|\overline x.$$gives two new bonds for (i) and (ii), respectively.\qquad\end{proof}

In a permutation an ordered triple of values $x\cdots y\cdots z$ is \emph{positively oriented} if either $x<y<z$, or $y<z<x$, or $z<x<y$ occurs.
\begin{crt}\label{2-move to the left}
Let $\bar\pi$ be a permutation on $[n]^0$. A $2$-move to the left of $\bar\pi$ occurs if one of the following holds.
\begin{romannum}
\item $\bar\pi=[\cdots x\,y\dots z\,\overline x\cdots]$ and $x,y,z$ are positively oriented,
\item $\bar\pi=[\cdots x\,y\,\overline x\cdots].$
\end{romannum}In particular, the following block transpositions on $[n]^0$ create a $2$-move to the left of $\bar\pi$.
\begin{remunerate}
\item[]For {\rm{(i)}},
\item $\bar\sigma(x,x+z-y+1,z)$ if $x<y<z$,
\item $\bar\sigma(y-1,y-1+x-z,x)$ if $y<z<x$,
\item $\bar\sigma(z,z+y-1-x,y-1)$ if $z<x<y$.
\item[]For \rm{(ii)},
\item $\bar\sigma(x,x+1,y)$ if $x<y$,
\item $\bar\sigma(y-1,x-1,x)$ if $y<x$.
\end{remunerate}
\end{crt}
\begin{proof}
Let $\bar\sigma=\bar\sigma(a,b,c)$ for any $a,\,b,\,c$ with $-1\leq a<b<c\leq n$.  Then
$$\bar\sigma_t =\left\{\begin{array}{ll}
t, &  0\leq t\leq a\quad c+1\leq t\leq n,\\
t+ b- a, & a+1\leq t\leq c- b+ a,\\
t+ b- c, & c-b+a+1\leq t\leq c
\end{array}\right.$$follows from (\ref{function}). In particular, by (\ref{cuppoints}),
\begin{equation}\label{tras}
\begin{array}{l}
\bar\sigma_a= a,\quad\bar\sigma_{a+1}=b+1,\quad\bar\sigma_{a+c-b}=c,\\
\bar\sigma_{a+c-b+1}=a+1,\quad\bar\sigma_{c}=b,\quad\bar\sigma_{c+1}=c+1.
\end{array}
\end{equation}Our purpose is to determinate $a,b,c$ so that $\bar\sigma$ is a $2$-move to the left of $\pi$. For this, $\bar\sigma$ must satisfy the following relations:
\begin{equation}\label{2-move}
\bar\sigma_y=\bar\sigma_x+1\quad\bar\sigma_{\overline x}=\bar\sigma_z+1.
\end{equation}

If the hypothesis in case 1 is satisfied, take $x$ for $a$. By (\ref{tras}), we obtain both $\bar\sigma_x=a$ and $\bar\sigma_{\overline x}=b+1$. This together with (\ref{2-move}) gives $\bar\sigma_y=a+1$ and $\bar\sigma_z=b$. By (\ref{tras}), we get
$$\left\{\begin{array}{l}
a=x\\
c=z\\
b=a+c-y+1.
\end{array}\right.$$Since $\pi$ is bondless, here $x<y-1$ may be assumed. This together with $y<z$ give $x<x+z-y+1<z$, whence the statement in case 1 follows. To deal with case 4 it is enough to use the same argument after switching $z$ and $y$.

If the hypothesis in case 2 is satisfied, take $\bar\sigma_x=b$ and $\bar\sigma_z=c$. This choice together with (\ref{2-move}) gives $\bar\sigma_y=b +1$ and $\bar\sigma_{\overline x}=c+1$. By (\ref{tras}), we have
$$\left\{\begin{array}{l}
a=y-1\\
c=x\\
b=a+c-z.\\
\end{array}\right.$$Since $y-1<y-1+x-z<x$, the statement in case 2 follows. Case 5 may be settled with the same argument after switching $z$ and $y$.

In case 3, let $\bar\sigma_x=c$ and $\bar\sigma_z =a$. This together with (\ref{2-move}) gives $\bar\sigma_y=c +1$ and $\bar\sigma_{\overline x}=a+1$. By (\ref{tras}), we obtain
$$\left\{\begin{array}{l}
a=z\\
c=y-1\\
b=a+c-x.\\
\end{array}\right.$$Since $z<z+y-1-x<y-1$, the statement holds.\qquad\end{proof}

\begin{rem}\label{fix}
\emph{Note that the block permutation on $[n]^0$ appearing in cases 1,3, and 4 of Criterion \ref{2-move to the left} fixes $0$. In cases 2 and 5, this occurs if and only if $y\neq 0$.}
\end{rem}

For the proof of Proposition \ref{propEE}, we begin by constructing the required moves for reducible permutations.

\section{Reducible case}
A permutation $\pi$ on $[n]$ is \emph{reducible} if for some $k$ with $0<k<n$ the segment $0\cdots \pi_k$ contains all values $0,\ldots,k$ while the segment $\pi_k\cdots n$ contains all values $k,\ldots, n$. In particular, $\pi_k=k$ is required. It is crucial {\bf{to note}} that a reducible permutation collapses into a smaller permutation by erasing the segment $\pi_k\cdots \pi_n$. If a reverse permutation is produced in this way, we proceed by contracting the segment $0\cdots \pi_k$ to $0$. There may be that both contractions produce a reverse permutation. This only occurs when
\begin{equation}\label{red}
[0\,\pi]=[0\,k-1\, k-2 \cdots 1 \, k \, n \, n-1 \cdots k+1].
\end{equation}
After carrying out the $1$-move $k-1| k-2 \cdots 1 \, k \,n| n-1 \cdots  k+1|,$
Criterion \ref{2-move to the left}.3 applies to $k-1 \,n-1 \cdots 1 \, k$ whence Proposition \ref{propEE} follows in this case.

To investigate the other cases we show that a permutation of the form (\ref{red}) occurs after a finite number of steps. For this purpose, let $[0\,\pi]^0=[0\,\pi]$, and let $[0\,\pi]^l=[0\,\pi_1^l \cdots \pi_{n_l}^l]$ for any integer $\ell\ge 0$. First, we prove that reducing $[0\,\pi]^l$ diminishes the length of $[0\,\pi]^l$ by at least three. Observe that a reduced permutation $[0\,\pi]^l$ is bondless since $[0\,\pi]$ is bondless. As $k_l<n_l$, erasing the segment $\pi^l_{{n_l}-1}\,\pi^l_{n_l}$ gives $\pi^l_{n_l-1}=k_l$ and $\pi^l_{n_l}=k_l+1.$ If we contract $0\,\pi_1^l\,\pi_2^l$ into $0$, we obtain $k_l=\pi^l_2=2$ and $\pi^l_1=1.$ Actually, none of these possibilities can occur as $[0\,\pi]^l$ is bondless. Nevertheless, we are able to reduce the length of $[0\,\pi]^l$ by exactly three. For instance, let $[0\,\pi]^l=[0\,2\,1\,3\cdots]$. After contracting $l\leq\lfloor n/3\rfloor$ times the following four cases:
\begin{romannum}
\item $[0\,\pi]^l=[0]$,
\item $[0\,\pi]^l=[0\,1]$,
\item $[0\,\pi]^l=[0\,2\,1]$,
\item $[0\,\pi]^l=[0\,1\,2]$
\end{romannum}
hold. In case (i) and (ii) we have $k_{l-1}=1$ and $k_{l-1}=2$, respectively. As $[0\,\pi]^l$ is bondless, only case (iii) occurs, a contradiction, since collapsing any segment of $[0\,\pi]^{l-1}$ does not produce a reverse permutation. Therefore, the assertion follows.

\section{Irreducible case}

It remains to prove Proposition \ref{propEE} when $\pi$ is bondless and irreducible. We may also assume that no permutation $\bar\pi\in\pi_\circ^\circ$ satisfies either Criterion \ref{2-move to the right} or Criterion \ref{2-move to the left} with a block transposition fixing $0$. Otherwise, getting a further $1$-move is trivial.

Up to toric equivalence, choose $\bar\pi$ fulfilling the minimality condition on $0\cdots 1$, that is, the shortest sequence $m\cdots \overline{m}$ in $\bar\pi$ occurs for $m=0$. To prove that such a permutation exists, we start with $[0\,\pi]=[0\cdots \pi_u\cdots \pi_v \cdots n]$, where $\pi_u=m, \pi_v=\overline{m}$. Note that $\pi$ is torically equivalent to $\pi'$, where $\bar\pi=[0\,\pi']$ is defined by $\bar\pi_x=\pi_{x+u}-m$ for every integer $x$ with $0\le x \le n$, and the indices are taken mod $n+1$. Then $\bar\pi_0=0$ and $\bar\pi_{v-u}=\pi_v-m=1$.

We begin by observing that the minimality condition on $0\cdots 1$ always rules out the case $\bar\pi=[\cdots x\cdots \overline x\cdots 1\cdots]$. The absence of bonds rules out the extremal case $\bar\pi=[0\,1\cdots]$, while the absence of a $2$-move fixing $0$ makes it possible to avoid $\bar\pi=[0\,x_1\,1\cdots]$ by applying Criterion \ref{2-move to the left}.4 to $0\,x_1\,1$. Therefore, we may write $\bar\pi$ in the form $[0\,x_1\cdots x_l\,1 \cdots]$ with $l\geq 2$. Note that $x_1>x_l$, otherwise Criterion \ref{2-move to the left}.1 applies to $0\,x_1\cdots x_l\, 1$, whence $\overline x_1$ is on the right of $1$ when $x_1\neq n$. Now one of the $1$-move listed below
\begin{romannum}
\item $0|x_1\,x_2\cdots x_l|1\cdots|\overline x_1$ if $x_1\neq n$,
\item $0|x_1\cdots x_l|1\cdots x_n|$ if $x_1=n$ and $x_n\neq 1$,
\item $0|x_1\cdots x_l|1|$ if $x_1=n$ and $x_n= 1$
\end{romannum}
turns $\bar\pi$ in one of the following forms:
\begin{remunerate}
\item $[0\cdots x_1\, x_2\cdots x_l\,\overline x_1\cdots]$ if $x_1\neq n$ and $l\neq 2$,
\item $[0\cdots x_1\,x_2\,\overline x_1\cdots]$ if $x_1\neq n$ and $l=2$,
\item $[0\cdots x_1\,x_2\cdots x_l]$ if $x_1=n,\,x_n\neq 1$ and $l\neq 2$,
\item $[0\cdots x_1\,x_2]$ if $x_1=n,\,x_n\neq 1$ and $l=2$,
\item $[0\,1\,x_1\,x_2\cdots x_l]$ if $x_1=n,\,x_n= 1$ and $l\neq2$,
\item $[0\,1\,x_1\, x_2]$ if $x_1=n,\,x_n= 1$ and $l=2$.
\end{remunerate}
Unless $x_1>x_2>x_l$, Proposition \ref{propEE} holds. In fact, there exists a permutation in $\pi_\circ^\circ$ that satisfies one of the hypotheses of Criterion \ref{2-move to the left}. More precisely, we may apply either Criterion \ref{2-move to the left}.2 or Criterion \ref{2-move to the left}.3 in case 1 and Criterion \ref{2-move to the left}.2 in case 3. For $l=2$, the statement follows from Criterion \ref{2-move to the left}.5. Some block transpositions in Criterion \ref{2-move to the left}.2 and \ref{2-move to the left}.5 may not fix $0$. This cannot actually occur, since we use block transpositions on $[n]^0$ of the form $[0\,\sigma(i,j,k)]$ in all cases; see Remark \ref{fix}.

Therefore, we may assume $\bar\pi=[0\,x_1\,x_2\cdots x_l\,1\cdots]$ with $x_1>x_2>x_l$. Two cases are treated separately according as $x_2=x_1-1$ or $x_2<x_1-1$.

\subsection{Case $x_2= x_1-1$}\label{casee}

If $\bar\pi=[0\,x_1\,x_2\cdots x\cdots1\cdots \overline x\cdots]$ occurs for some $x$, then the $1$-move $$x_1|x_2\cdots x|\cdots 1\cdots|\overline x$$ turns $\bar\pi$ into $[0\,x_1\cdots 1\cdots x_2\cdots]$. After that, the existence of a $2$-move is ensured by Criterion \ref{2-move to the right}, whence Proposition \ref{propEE} holds.

Therefore, we may assume that if $x$ ranges over $x_3,x_4,\ldots,x_i,\ldots x_l$, then $\overline x$ is on the left of $x$. At each stage two cases arise depending upon whether $\overline x_i=x_{i-1}$ or $x_i=n$, where $l\neq i\neq 2$.

\subsubsection{Case $\overline x_i=x_{i-1}$}\label{imp}

Note that $x_1\,x_2\cdots x_l$ is a reverse consecutive sequence, and $n$ is on the right of $1$. As $\pi$ is bondless, two cases arise according as either $1<x_n<x_l$ or $x_1<x_n<n$.

In the former case, carrying out the $1$-move $|\overline {x}_l\,x_l\,1\cdots n|\cdots x_n|$, the resulting permutation is $[\cdots x_n\,\overline {x}_l\,x_l\,1\cdots]$. In the latter case, use the $1$-move $|x_1\cdots|1\cdots n|$ to obtain $[0\cdots n\,x_1\cdots x_n]$. In both cases Proposition \ref{propEE} follows from Criterion \ref{2-move to the left}.2, applied to a block transposition that fixes $0$; see Remark \ref{fix}. More precisely, let $\bar\pi\circ[0\,\sigma]$ be the permutation obtained in both cases, and let $\bar\pi\circ[0\,\sigma]\circ\alpha^r$ with $1\leq r\leq n$ be the permutation that satisfies the hypothesis of Criterion \ref{2-move to the left}.2. Then $[0\,\tau]\circ\bar\pi\circ[0\,\sigma]\circ\alpha^r$ has three bonds at least. By the Shifting Lemma \ref{lemc19ott2013}, there exists an integer $s$ with $1\leq s\leq n$ and a block transposition $\sigma'$ on $[n]$ such that $$[0\,\tau]\circ\bar\pi\circ[0\,\sigma]\circ\alpha^r=[0\,\tau]\circ\bar\pi\circ\alpha^s\circ[0\,\sigma'].$$ Since $\bar\pi\circ\alpha^s\in \pi_\circ^\circ$, Proposition \ref{propEE} follows.

\subsubsection{Case $x_1=n$}

In this case there exists $k$ with $2\leq k\leq n-2$ such that
$$\bar\pi=[0\,n\,n-1\cdots n-(k-2)\,n-(k-1)\,1\cdots n-k \cdots].$$Since $\pi$ is not the reverse permutation, $2$ is on the right of $1$ whence $2\leq k\leq n-2$. So two cases arise depending on the position of $2$ with respect to $n-k$.

If $2$ is on the left of $n-k$, then the $1$-move $|n-(k-1)\,1\,y\cdots |2\cdots n-k|$ turns $\bar\pi$ into $[\cdots n-(k-2)\,2\cdots 1\,y\cdots]$. As all integers $x$ with $n-(k-2)\leq x\leq n$ are in $0\cdots1$, this yields $y<n-(k-2)$. So Criterion \ref{2-move to the left}.1 applies to a permutation in the circular class of $[\cdots n-(k-2)\,2\cdots 1\,y\cdots]$, and the claim follows as in section \ref{imp}.

If $2$ is on the right of $n-k$, consider the $1$-move $$|n-(k-2)\, n-(k-1)\, 1|\cdots n-k\cdots z|2.$$ If $z=n-k$, then the above transposition takes our permutation to $$[\cdots n-k\,n-(k-2)\,n-(k-1)\cdots].$$ The existence of a $2$-move is ensured by Criterion \ref{2-move to the left}.4. Otherwise $z<n-k$, and Criterion \ref{2-move to the left}.2 applies to a permutation in the circular class of $$[\cdots z\,n-(k-2)\,n-(k-1)\,1\cdots]$$and a block transposition that fixes $0$; see Remark \ref{fix}. Hence the claim follows as in section \ref{imp}.

\subsubsection{Case $x_i=n$}

As we have seen in Case \ref{casee}, $\bar x$ is on the left of $x$ for every $x$ in $0\cdots 1$. Therefore, when $x_i=n$, $x_{j-1}=\overline{x}_j$ for $1\leq j\leq i-1$, but it does not necessarily holds for all $j$ with $i\leq j\leq l$. However, there exists $h$ with $1\leq h \leq n-1-x_1$ such that for each $x\neq 0$ on the left of $x_l$ either $\overline{x}_l\leq x \leq x_1$ or $n-(h-1)\leq x \leq n$ occurs. Both these subsequences are decreasing by our minimality condition on $0\cdots 1$.

First, suppose the existence of $k$ with $3 \leq k\leq h$ so that
$$\bar\pi=[0\,x_1\,\underline {x_1}\cdots x_t\,\underline {x_t}\,n\,n-1\,n-2\cdots n-(k-3)\,n-(k-2)\,n-(k-1)\,\underline{\underline {x_t}}\cdots 1\cdots],$$
where $\overline {x}_l\leq \underline {x_t} \leq \underline {x_1}$ and $\underline{\underline x}$ stands for $\underline y$ with $y=\underline x$. Now one of the following $1$-move:
$$\begin{array}{ll}
x_t|\underline {x_t}\,n\cdots n-(k-3)|n-(k-2)\,n-(k-1)\,\underline{\underline{x_t}}|,& k>3,\\
x_t|\underline {x_t}\,n|n-1\,n-2\,\underline{\underline{x_t}}|,& k=3
\end{array}$$turns $\bar\pi$ into $[\cdots x_t\,n-(k-2)\,n-(k-1)\,\underline{\underline{x_t}}\cdots]$. Therefore, Criterion \ref{2-move to the left}.2 applies to a permutation in the circular class of $[\cdots x_t\,n-(k-2)\,n-(k-1)\,\underline{\underline{x_t}}\cdots]$ and a block transposition fixing $0$; see Remark \ref{fix}. Hence the assertion follows as in {\bf{Section}} \ref{imp}.

Note that case $\bar\pi=[\cdots\underline {x_t}\,n\,n-1\,\underline{\underline {x_t}}\cdots 1\cdots]$ does not occur. In fact, the existence of a $2$-move of a permutation in the toric class $\pi_\circ^\circ$ and a block transposition fixing $0$ is ensured by Criterion \ref{2-move to the left}.2 and Remark \ref{fix}. Therefore, we may assume that $$\bar\pi=[\cdots\underline {x_t}\,n\,\underline{\underline {x_t}}\cdots].$$ Now a $2$-move of a permutation in the toric class of $\pi^\circ_\circ$ with a block transposition fixing $0$ is ensured by Criterion \ref{2-move to the left}.4, a contradiction.

\subsection{Case $x_2< x_1-1$}

If $\underline {x_1}$ is on the right of $1$, then there exists a $2$-move of $[0\,x_1\cdots 1 \cdots \underline {x_1}\cdots]$ by Criterion \ref{2-move to the right}, a contradiction. Therefore, $\underline {x_1}$ is on the left of $1$. We look for the biggest integer $k$ with $2\leq k\leq l-1$ such that
\begin{equation}\label{diseq}
x_1-(k-1)>x_2-(k-2)>\cdots>x_i-(k-i)>\cdots> x_{k-1}-1>x_k>x_l
\end{equation}hold. Note that (\ref{diseq}) holds for $k=2$ by $x_1-1>x_2>x_l$. Suppose that $\overline {x}_k$ is on the left of $1$ with $x_i=\overline {x}_k$ for some $i$. Then $1\leq i\leq k-1$ and $x_i-1\geq x_i-(k-i)>x_k$, a contradiction. Therefore, $\overline {x}_k$ must be on the right of $1$. The $1$-move $0|x_1\cdots x_l|1\cdots |\overline {x}_k$ turns $\bar\pi$ into $[0\,1\cdots x_k\,x_{k+1}\cdots x_l\,\overline {x}_k\cdots],$ and the following three possibilities arise:
\begin{enumerate}
\item $x_l<x_k<x_{k+1}$,
\item $x_{k+1}<x_l<x_k$,
\item $x_l<x_{k+1}<x_k$.
\end{enumerate}Proposition \ref{propEE} follows from Criterion \ref{2-move to the left}.3 in case 1 and from Criterion \ref{2-move to the left}.2 in case 2, applied to a block transposition fixing $0$; see Remark \ref{fix}.

In the remaining case, adding $1$ to each side in (\ref{diseq}) gives $x_{i-1}-(k-i)> x_{k-1},$ where $1<i<k$. If $\overline {x}_{k-1}$ is on the left of $1$ and $x_{i-1}=\overline {x}_{k-1}$ for some $i>1$, then $x_{i-1}-1> x_{i-1}-(k-i)> x_{k-1}$, a contradiction. If $\overline {x}_{k-1}$ is on the left of $1$ and $x_1=\overline {x}_{k-1}$, subtracting $1$ from each side in (\ref{diseq}) gives
$$x_1-k>x_2-(k-1)>\cdots>x_i-(k+1-i)>\cdots> x_{k-1}-2>x_k-1.$$Here $x_k-1>x_{k+1}$ cannot actually occur by our maximality condition on $k$. Therefore, $x_{k+1}=x_k-1$. The $1$-move $|x_{k-1}\,x_k|\underline{x_k}\cdots1\cdots|\overline {x}_k$ turns $\bar\pi$ into $[0\,\overline {x}_{k-1}\cdots1\cdots x_{k-1}]$, and Proposition \ref{propEE} follows from Criterion \ref{2-move to the right}. Here we consider $\overline {x}_{k-1}$ to be on the right of $1$. Adding $k-i$ to each side in (\ref{diseq}) gives $x_1-(i-1)> x_i$. Assume $\underline{x_1}$ is on the left of $x_k$. Since $x_2\neq x_1-1$, then  $x_i=x_1-1$ for some $i>2$ and $x_1-1>x_1-(i-1)> x_i$, a contradiction. Therefore, we may assume $\underline{x_1}$ is in on the right of $x_k$. Since $\overline {x}_{k-1}$ is on the right of $1$, the $1$-move
$$0|x_1\cdots x_k|\underline{x_k}\cdots\underline{x_1}|$$ turns $\bar\pi$ into $[0\,\underline{x_k}\cdots x_{k-1}\,x_k\cdots1\cdots\overline {x}_{k-1}\cdots]$. Now, there exists a $2$-move, namely $$\underline{x_k}|\cdots x_{k-1}|x_k\cdots1\cdots|\overline {x}_{k-1}.$$ Therefore, Proposition \ref{propEE} in case 3 follows. This concludes the proof of Proposition \ref{propEE}.

\section{Proof of Eriksson's bound}\label{tiger}

Let $\pi$ be a permutation on $[n]$ with $n\geq 4$. We apply Corollary \ref{kitty} after dismissing the case where $\pi$ is the reverse permutation by virtue of \cite[Theorem 4.3]{EE}. Assume that the first case occurs in Corollary \ref{kitty}, the other two cases may be investigated in the same way. By Proposition \ref{distance} and Corollary \ref{prop d},
\begin{equation}\label{fi}
d(\rho)\leq d(\rho\circ\sigma\circ\tau)+d(\tau^{-1})+d(\sigma^{-1}),
\end{equation} As the distance of a block transposition is $1$, the right-hand side in (\ref{fi}) is equal to $d(\rho\circ\sigma\circ\tau)+2$.
Collapsing bonds into a single symbol has the effect of collapsing $\rho\circ\sigma\circ\tau$ into a permutation on $[{n-3}]$. Then $d(\rho\circ\sigma\circ\tau)+2\leq d(n-3)+2$. By Theorem \ref{torically}, we obtain $d(\pi)=d(\rho)=\leq d(n-3)+2$, and then $$d(n)\leq d(n-3)+2.$$

Now the argument in the proof of \cite[Theorem 4.2]{EE} may be used to finish the proof of Eriksson's bound. This also shows that Eriksson's bound holds only by virtue of Theorem \ref{torically}.

\end{document}